\def\setC{\,\mathbb{C}}
\def\setR{\mathbb{R}}
\def\setN{\mathbb{N}}
\def\als{{\scriptscriptstyle\#}}
\def\alp#1{{#1_\als}}
\newcommand{\R}{{\mathbb R}}
\newcommand{\C}{{\mathbb C}}
\newcommand{\Cnxn}{\C^{n \times n}}
\newcommand{\Rnxn}{\R^{n \times n}}
\newcommand{\set}[2]{ \left\{ {#1}\,\big\vert\, {#2}\right\}}
\newcommand{\inprod}[1]{\left\langle #1 \right\rangle}
\newcommand{\norm}[1]{\Vert #1 \Vert}
\newcommand{\pow}[2]{\!\left\langle #1 \right\rangle^{#2}\!}
\newcommand{\rchi}{\raise.35ex\hbox{$\chi$}}
\newtheorem{rumatemppu}{Example}
\newenvironment{example}{\begin{rumatemppu}\rm}{\end{rumatemppu}}
\newtheorem{rumatemppuu}{Algorithm}
\title{
FUNCTION THEORY OF ANTILINEAR OPERATORS 
} 
\author{
Marko Huhtanen\thanks{
Division of Mathematics,
Department of Electrical and Information Engineering,
University of Oulu,
90570 Oulu 57, 
Finland,
({\tt Marko.Huhtanen@aalto.fi}).}
\and 
Allan Per\"am\"aki\thanks{
Department of Mathematics and Systems Analysis,
Aalto University, 
Box 1100
FIN-02015,
Finland,
({\tt allan.peramaki@aalto.fi}).}
}
\begin{document}
\maketitle
\begin{abstract} Unlike in complex linear operator theory,
polynomials or, more generally, Laurent series 
in antilinear operators cannot be modelled with complex
analysis.
There exists a corresponding function space, though, 
surfacing in spectral mapping theorems. 
These spectral mapping theorems are inclusive in general. Equality
can be established in the self-adjoint case.
The arising functions
are shown to possess a biradial character.
It is shown that to any 
given set of Jacobi
parameters corresponds a  biradial measure
yielding these parameters in an iterative orthogonalization
process in this function space, once equipped with the corresponding $L^2$ structure. 
\end{abstract}
\begin{keywords} antilinear operator, Laurent series, spectral mapping,
biradial function, biradial measure, Jacobi operator,
Hankel operator
\end{keywords}

\begin{AMS} 
47A05, 33C47, 47A10
\end{AMS}

\pagestyle{myheadings}
\thispagestyle{plain}

\markboth{M. HUHTANEN AND A. PER\"AM\"AKI  
}{FUNCTIONS OF ANTILINEAR OPERATORS} 

\section{Introduction}  
Classical complex analysis is ubiquitous in complex linear Hilbert
space operator theory; see, e.g., \cite{Niko1,Niko2, AM} and references
therein. 
The interplay between
these two fields has found many forms
of which
the spectral mapping theorem is the most well-known and 
undoubtedly most widely used.
In real linear operator theory,  
complex linear operators 
constitute one extreme 
while antilinear operators constitute the other one.
There is strong evidence to expect 
that these two extremes turn out to be mathematically
equally rich \cite{HP,HS,HUNEVA,HP1}. 
In this paper, a function space structure is identified 
which has an analogous connection with antilinear operators as
complex analysis has with complex linear operators.
This function space surfaces in spectral mapping 
theorems for antilinear operators. 
An $L^2$ theory for it arises in the iterative construction
of invariant subspaces for self-adjoint antilinear operators 
once the appearing Jacobi operators are connected with biradial measure
spaces.

For the spectral mapping theorems,
suppose $A$ is a bounded antilinear
operator on a complex
Hilbert space $H$.
Antilinear means that $A\lambda=\overline{\lambda}A$ for
any complex number $\lambda$.
Then, for simplicity, take 
a polynomial $p(\lambda)=\sum_{k=0}^{j}\alpha_k\lambda^k$. 
If $x\in H$ is an eigenvector of $A$, there holds
\begin{equation}\label{eua}
p(A)x=\hat{p}(\lambda)x,
\end{equation}
where $p$ has transformed to  $\hat{p}(\lambda)=\sum_{k=0}^{\lfloor  \frac{j}{2} \rfloor }(\alpha_{2k} +
\alpha_{2k+1}\lambda)\left| \lambda\right|^{2k}.$
More generally, by allowing more complicated analytic functions $f$,
the transformed functions
can be expressed as
$$\hat{f}(\lambda)= 
 u(|\lambda|^2)+v(|\lambda|^2)\lambda.$$
with sufficiently regular complex valued functions $u$ and $v$.
These functions, denoted by $\mathcal{C}(r2)$,
constitute a vector space over $\setC$ carrying a natural notion
of product.  
The spectral mapping for the spectrum $\sigma(A)$ then takes the form
\begin{equation}\label{spektio}
\hat{f}(\sigma(A)) \subset \sigma(f(A)).
\end{equation}
The equality cannot be established in general.
However, if $A$ is additionally self-adjoint, the equality is shown
to hold in \eqref{spektio}. In particular, when treated antilinearly,
Hankel operators fit in this category in a natural way.

For the $L^2$ theory for these functions,
even if $x$ is not an eigenvector of $A$, collecting all 
the possible vectors on the left-hand side of \eqref{eua} gives
rise to an invariant subspace 
$$K(A;x)=\overline{\set{p(A)x}{p\in \mathcal{P}}}$$
of $A$, 
where $\mathcal{P}$ denotes the set of polynomials.
In the self-adjoint case, the function space $\mathcal{C}(r2)$ arises 
again once $A$ is represented on $K(A;x)$ with
an antilinear Jacobi operator
$$\alp{J} \tau, $$
where $\alp{J}$ is a complex symmetric (typically infinite) matrix
and $\tau$ denotes the conjugation operator. 
To characterize the Jacobi parameters on the diagonals of $\alp{J}$,
a biradially supported $L^2$ theory for $\mathcal{C}(r2)$ is devised. 
A curve in $\setC$ is said to be biradial if it intersects every origin
centred circle at most at two points. 
Namely, it is shown that to any 
given set of bounded Jacobi
parameters corresponds a compactly supported biradial measure
yielding these parameters in an iterative orthogonalization
process for polynomials $\hat{p}$ in the respective $L^2$ space. 
The case of unbounded
Jacobi parameters can be treated in terms of conditions on the moments
recorded in a Hankel-like matrix. 
Regarding the lack of uniqueness of this correspondence, the finite dimensional
case is completely solved.

The paper is organized as follows. In Section \ref{sec2}
spectral mapping theorems for antilinear operators are derived.
The corresponding function space structure is identified. 
In Section \ref{sec3} a theory for self-adjoint 
antilinear Jacobi operator is developed. To deal
with the Jacobi parameter problem, biradial $L^2$
spaces are introduced. In Section \ref{sec4}
the unbounded case is considered.
In Section \ref{noninjfinite}
the non-injective determination of the Jacobi 
parameters is solved in finite dimensional cases.

\section{Functions of  antilinear operators 
and spectral theory}\label{sec2}  
A continuous additive\footnote{An operator $B$ on $H$ is additive if
$B(x+y)=Bx+By$ for any $x,y\in H$.} 
operator on a complex
Hilbert space $H$ is real linear, i.e., 
it commutes with real scalars.
This fact can be found
already in Banach's classic book on linear operators 
\cite{BA}.\footnote{It is instructive to 
bear in mind that Banach's book started with additive operators
and dealt only with real scalars,
a fact which sometimes was regarded as curious \cite[p.397]{Rutiini}.} 
Denote the family of such operators
by $\mathcal{B}(H)$.
The norm of $B \in \mathcal{B}(H)$
is defined as
$$||B||=\sup_{||x||=1}||Bx||.$$
The adjoint of $B$ is the real linear operator $B^*$ 
satisfying 
$$(Bx,y)_{\setR}=(x,B^*y)_{\setR}$$
for every $x,y \in H$. Here $(\cdot,\cdot)_{\setR}={\rm Re}\,(\cdot,\cdot)$,
where $(\cdot,\cdot)$ denotes the inner product on $H$.
If $B^*=B$, then $B$ is said to be self-adjoint.

There exists a
unique separation of $B$
into its complex linear
and antilinear parts as 
$$B=C+A,$$ where
\begin{equation}\label{sepa}
C=\frac{1}{2}(B-iBi) \, \mbox{ and }\,A=
\frac{1}{2}(B+iBi).
\end{equation}
That is, then $C$ is complex linear while $A$ is antilinear, i.e., 
$A\lambda=\overline{\lambda}A$ holds for any $\lambda\in \setC$.

Of course, the case of complex linear operators, i.e., when $A=0$, 
has been extensively studied. In what follows, we are more interested in the antilinear case, i.e., when $C=0$.
To this end, the following space of polynomials  was introduced in \cite{HP}.

\begin{definition}\label{defpol}
Polynomials of the form
\begin{equation}\label{polap}
\hat{p}(\lambda)=\sum_{k=0}^{\lfloor  \frac{j}{2} \rfloor }(\alpha_{2k} +
\alpha_{2k+1}\lambda)\left| \lambda\right|^{2k}
\end{equation}
with $\alpha_k\in \setC$ and, for $j$ even $\alpha_{j+1}=0$,
are denoted by $\mathcal{P}_j(r2)$. Their union 
$\cup_{j=0}^\infty \mathcal{P}_j(r2)$ is 
denoted by $\mathcal{P}(r2).$
\end{definition}

For a nonzero polynomial, the greatest integer $l$ such that
$\alpha_l\not=0$ in \eqref{polap}
is called the degree of the polynomial.

To see how these relate with real linear operators,
take a standard analytic polynomial 
$p(\lambda)=\sum_{k=0}^{j}\alpha_k\lambda^k$. 
If $B\in \mathcal{B}(H)$, then 
$p(B)$ is defined as $$p(B)=\sum_{k=0}^{j}\alpha_kB^k.$$
Of course, polynomials in complex linear operators appear regularly. 
Polynomials in antilinear operators
have  recently proved  useful 
in numerical linear
algebra and approximation theory
\cite{EHVP, HP1,HP}. 
A first notable difference between 
these two extremes is the fact that a polynomial
in an antilinear operator typically becomes genuinely a real linear
operator, i.e., it has nonzero complex and antilinear  parts.
Second, $\mathcal{P}_j(r2)$ is a natural function space 
to analyze polynomials in antilinear operators.

Namely, associate with $p$ the polynomial $\hat{p}$ defined in \eqref{polap}. 
At the most fundamental level, such a transformation
occurs with  a spectral mapping theorem
for antilinear operators. This is
readily seen in terms of eigenvalues. If 
$A$ is antilinear and  $Ax=\lambda x$ for a nonzero $x\in H$ and $\lambda \in \setC$,
then \eqref{eua} holds. Consequently, 
the eigenvalues of $A$ are mapped with $\hat{p}$ to
be among the eigenvalues of $p(A)$.

Definition \ref{defpol}  was not aimed at maximal generality, however.
First, aside from eigenvalues, we have a more general notion
of spectrum.
Second, Laurent series in a complex linear operator can be defined as long
as the spectrum is contained in the annulus of convergence.
The spectrum of a real linear operator is defined in a natural way
as follows.

\begin{definition} 
The spectrum of $B\in \mathcal{B}(H)$ consists
of those points $\lambda \in \setC$ for which $\lambda I-B$
is not boundedly invertible. The set of these points is denoted
by $\sigma(B)$.
\end{definition}

The spectrum of a real linear operator 
is always compact. For antilinear operators, it is circularly symmetric
with respect to the origin, and it can be empty as well; see \cite{HS}. Hence,
a lack of spectral radius means that
spectrum alone is not sufficient to determine convergence of Laurent series
in real linear operators.
Still,
if $f$ is analytic in an annulus centred at the origin with
the Laurent series
\begin{equation}\label{lau}
f(\lambda)=\sum_{k=-\infty}^{\infty}\alpha_k \lambda^k,
\end{equation}
then, under appropriate assumptions on an antilinear $A$,
the real linear operator 
$f(A)$ is well defined in terms of the series.\footnote{Form
 $\limsup_{j \rightarrow \pm \infty}|\alpha_j|^{-1/j}$ and compare with 
$\limsup_{j \rightarrow \pm \infty}||A^j||^{1/j}$.} 
(If $\alpha_k=0$ for $k=-1,-2,\ldots$, then we have
a disc centred at the origin.) We assume that the spectrum
$\sigma(A)$ is contained in the annulus of convergence.
By inspecting  eigenvalues,
$f$ gets transformed in the process 
completely analogously as
\begin{equation}\label{functjoukko}
\hat{f}(\lambda)= 
\sum_{k=-\infty}^{\infty}(\alpha_{2k} +
\alpha_{2k+1}\lambda)\left| \lambda\right|^{2k}= 
 u(|\lambda|^2)+v(|\lambda|^2)\lambda.
\end{equation}
Clearly, $u$ and $v$ are polynomials if and only if $f$ is.
Observe that $\hat{f}$ maps origin centred circles to
circles. In particular,
we regard these functions as biradial 
for the following reason.

\smallskip

\begin{example}
The functions $u$ and $v$ in \eqref{functjoukko}
are ``biradially'' uniquely determined 
in the
following sense. Suppose $\theta_1,\theta_2\in [0,2\pi)$
and $\theta_1 \not=\theta_2$.
Then, for $r$ fixed, the conditions
\begin{equation}
\left\{ \begin{array}{lll}
u(r^2)+v(r^2)re^{\theta_1}&=&a(r)\\
u(r^2)+v(r^2)re^{\theta_2}&=&b(r)
\end{array} \right.
\end{equation}
with $a(r)$ and $b(r)$ given, determine the values
of $u(r^2)$ and $v(r^2)$.\footnote{This makes extending  $\mathcal{C}(r2)$
apparent. To determine the values $2k$-radially, 
we are lead to consider functions
of the form $\sum_{j=0}^{2k-1}u_j(|\lambda|^{2k})\lambda^j$,
where $u_j$ are sufficiently  smooth and $k\in \setN$.} 
Of course, if $v\equiv 0$, then we are dealing a standard
radial function.
\end{example}

\smallskip

Denote by $\mathcal{C}(r2)$ functions of the form on
right-hand side of \eqref{functjoukko}.
Assuming that the antilinear $A$ is additionally self-adjoint,
we shall allow
continuous functions of this type 
by noting that $A^2$ is
complex linear positive semidefinite.
To ensure that this is consistent, let us invoke  
the following proposition.
\begin{proposition}[\cite{HS}]\label{HSprop}
Let $A\in\mathcal{B}(H)$ be antilinear. Then
$\lambda \in \sigma(A)$ if and only if $|\lambda|^2\in\sigma(A^2)$.
\end{proposition}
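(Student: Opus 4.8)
The plan is to prove both directions of the equivalence $\lambda \in \sigma(A) \iff |\lambda|^2 \in \sigma(A^2)$ by relating the invertibility of $\lambda I - A$ to that of $|\lambda|^2 I - A^2$. The key algebraic observation is the factorization coming from the antilinearity of $A$: since $A\mu = \overline{\mu}A$ for every complex scalar $\mu$, one has $(\lambda I - A)(\overline{\lambda} I + A) = |\lambda|^2 I + \lambda A - A\overline{\lambda} I - A^2 = |\lambda|^2 I + \lambda A - \lambda A - A^2 = |\lambda|^2 I - A^2$. I would verify this identity first, being careful that $A\overline{\lambda} = \lambda A$ (the conjugate passing through $A$ is exactly what makes the cross terms cancel). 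Symmetrically, $(\overline{\lambda} I + A)(\lambda I - A)$ should also equal $|\lambda|^2 I - A^2$, so the two linear factors commute in the relevant sense and both products give the complex linear operator $|\lambda|^2 I - A^2$.

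Given this factorization, the easy direction is the contrapositive of ``$\lambda \in \sigma(A) \Rightarrow |\lambda|^2 \in \sigma(A^2)$'': if $|\lambda|^2 I - A^2$ is boundedly invertible, then from $(\lambda I - A)(\overline{\lambda} I + A) = |\lambda|^2 I - A^2$ the operator $\lambda I - A$ has a bounded right inverse, namely $(\overline{\lambda} I + A)(|\lambda|^2 I - A^2)^{-1}$, and from the other ordering it has a bounded left inverse, hence $\lambda I - A$ is boundedly invertible and $\lambda \notin \sigma(A)$. For the reverse direction, suppose $\lambda \in \sigma(A)$, i.e.\ $\lambda I - A$ is not boundedly invertible; I would argue that $|\lambda|^2 I - A^2$ cannot be invertible either, since if it were, the same factorization would furnish a bounded inverse for $\lambda I - A$, a contradiction. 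This handles $\lambda \neq 0$ cleanly.

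The main obstacle I anticipate is the point $\lambda = 0$, together with the subtlety that bounded invertibility for real linear operators is not fully captured by a single one-sided inverse in the way it is for, say, Banach algebra elements — one must check both injectivity/surjectivity and boundedness of the inverse. For $\lambda = 0$ the claim reduces to $0 \in \sigma(A) \iff 0 \in \sigma(A^2)$, which again follows from $A^2 = -( - A)(A) $ type reasoning but should be inspected directly: $0 I - A = -A$ is invertible iff $A$ is, and then $A^2$ is invertible, while conversely invertibility of $A^2$ forces $A$ to be bounded below and surjective, hence invertible. I would also want to record that the circular symmetry of $\sigma(A)$ mentioned earlier is consistent here, since the condition $|\lambda|^2 \in \sigma(A^2)$ depends only on $|\lambda|$. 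Throughout, the genuinely real linear (rather than complex linear) nature of $\lambda I - A$ means I should phrase ``boundedly invertible'' in $\mathcal{B}(H)$ and use the fact that a bounded operator with a bounded two-sided inverse is exactly what the spectrum definition requires; the factorization supplies precisely such a two-sided inverse, so no deeper functional-analytic machinery beyond the open mapping theorem (implicit in ``boundedly invertible'') is needed.
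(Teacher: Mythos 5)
Your factorization is correct and is the right starting point: for antilinear $A$ one indeed has $(\lambda I - A)(\overline{\lambda} I + A) = (\overline{\lambda} I + A)(\lambda I - A) = |\lambda|^2 I - A^2$, and your use of both orderings to extract a bounded left inverse and a bounded right inverse of $\lambda I - A$ correctly yields: if $|\lambda|^2 I - A^2$ is boundedly invertible, then so is $\lambda I - A$, i.e.\ $\lambda \in \sigma(A) \Rightarrow |\lambda|^2 \in \sigma(A^2)$. (Note the paper itself gives no proof to compare against; it quotes the proposition from \cite{HS}, so your argument must stand on its own.)

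The genuine gap is that you never prove the other implication. Your paragraph labelled ``the reverse direction'' argues: if $\lambda I - A$ is not invertible, then $|\lambda|^2 I - A^2$ cannot be invertible, since otherwise the factorization would invert $\lambda I - A$. That is exactly the contrapositive of the implication you had already established, not its converse; after both paragraphs you have proved one inclusion twice. What is still missing is: if $\lambda I - A$ is boundedly invertible, then $|\lambda|^2 I - A^2$ is boundedly invertible. By your factorization this reduces to showing that invertibility of $\lambda I - A$ forces invertibility of the \emph{other} factor $\overline{\lambda} I + A$, and this is not automatic, because that factor concerns a different spectral parameter: $\overline{\lambda}I + A$ is invertible precisely when $-\overline{\lambda} \notin \sigma(A)$. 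The missing ingredient is the circular symmetry of the resolvent set, which you mention only as an afterthought consistency check rather than use. It can be supplied directly from antilinearity: for a unimodular scalar $c$, one has $c(\lambda I - A)c = c^2\lambda I - A$ (both occurrences of $c$ meaning multiplication by $c$, a boundedly invertible complex linear operator), so $\lambda I - A$ is invertible if and only if $c^2\lambda I - A$ is, for every $|c|=1$. Choosing $c$ with $c^2\lambda = -\overline{\lambda}$ (possible for $\lambda \neq 0$; the case $\lambda = 0$ is immediate, since $A$ invertible gives $A^2$ invertible) shows that $-\overline{\lambda}I - A$, hence $\overline{\lambda}I + A$, is invertible, and then the product $|\lambda|^2 I - A^2$ is invertible. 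With this supplement your proof is complete; without it, only one direction of the claimed equivalence is established.
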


\begin{definition}
Let $A\in\mathcal{B}(H)$ be antilinear and self-adjoint.
Let $u$ and $v$ be complex valued continuous functions defined
on the compact subset $\sigma(A^2)\subset[0,\infty)$. Then the function
$\hat{f}:\sigma(A)\to\C$ defined by
\begin{equation}\label{biradf}
\hat{f}(\lambda) = u(|\lambda|^2) + v(|\lambda|^2)\lambda
\end{equation}
is called a continuous biradial function.  
Define
\[f(A) = u(A^2) + v(A^2)A.\]
\end{definition}

Clearly, $\mathcal{C}(r2)$ is a vector space over $\setC$. 
There exists a natural notion of (noncommutative) product as well.
To this end, 
consider again \eqref{eua}.
For two elements 
$\hat{f}(\lambda)= u_1(|\lambda|^2)+v_1(|\lambda|^2)\lambda$
and $\hat{g}(\lambda)= u_2(|\lambda|^2)+v_2(|\lambda|^2)\lambda$
of $\mathcal{C}(r2)$, we have for the eigenvalues
\begin{equation}\label{ulo}
f(A)g(A)x=\hat{h}(\lambda)x=(\hat{f}*\hat{g})(\lambda)x
\end{equation}
once we define
$$\hat{h}(\lambda)=
u_1(|\lambda|^2)u_2(|\lambda|^2)
+|\lambda|^2 v_1(|\lambda|^2)\overline{v_2(|\lambda|^2)}+
\left(u_1(|\lambda|^2)v_2(|\lambda|^2)+
\overline{u_2(|\lambda|^2)}v_2(|\lambda|^2)\right)\lambda.
$$
Or, in short 
\begin{equation}\label{tulocr2}
\hat{f}*\hat{g}=(u_1+v_1\lambda\tau)
(u_2+v_2\lambda),
\end{equation}
 where $\tau$ denotes the conjugation operator.

After identifying an appropriate function space 
for antilinear operators, we are concerned with  
having a spectral mapping theorem. To this
end we need the following lemma.

\begin{lemma}\label{sapolyapprox}
Let $A\in\mathcal{B}(H)$ be antilinear and self-adjoint, and
let $\hat{f}:\sigma(A)\to\C$ be a continuous biradial function. Then
for any $\epsilon>0$ there exists a polynomial $p$ such that
\[\norm{f(A) - p(A)} < \epsilon
\qquad\text{and}\qquad
\max_{\lambda\in\sigma(A)} |\hat{f}(\lambda) - \hat{p}(\lambda)|<\epsilon.
\]
\end{lemma}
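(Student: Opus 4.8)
The plan is to reduce the approximation of a continuous biradial function $\hat f = u(|\lambda|^2) + v(|\lambda|^2)\lambda$ to the classical Weierstrass approximation theorem on the compact set $\sigma(A^2)\subset[0,\infty)$. Since $u$ and $v$ are continuous complex valued functions on the compact set $\sigma(A^2)$, for any $\delta>0$ there exist ordinary polynomials $P$ and $Q$ (in a single real variable) such that $\max_{s\in\sigma(A^2)}|u(s)-P(s)|<\delta$ and $\max_{s\in\sigma(A^2)}|v(s)-Q(s)|<\delta$. Setting $q(\lambda) = P(|\lambda|^2) + Q(|\lambda|^2)\lambda$ produces an element of $\mathcal{P}(r2)$; comparing \eqref{biradf} with \eqref{polap} one checks that every such $q$ is exactly $\hat p$ for a genuine analytic polynomial $p(\lambda)=\sum_k \alpha_k\lambda^k$, because the monomials $|\lambda|^{2k}$ and $|\lambda|^{2k}\lambda$ arise from the even and odd powers of $\lambda$ in $p$. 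Thus the second inequality $\max_{\lambda\in\sigma(A)}|\hat f(\lambda)-\hat p(\lambda)|<\epsilon$ follows directly once $\delta$ is chosen small enough, using that $|\lambda|$ is bounded on the compact set $\sigma(A)$ and invoking Proposition \ref{HSprop} to match $s=|\lambda|^2$ with the points of $\sigma(A^2)$.

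For the operator norm estimate I would pass through the functional calculus. Because $A$ is self-adjoint and antilinear, $A^2$ is a complex linear, positive semidefinite, self-adjoint operator whose spectrum is $\sigma(A^2)$, so the ordinary continuous functional calculus for $A^2$ applies. The key inequalities are the spectral bounds $\norm{u(A^2)-P(A^2)}\le \max_{s\in\sigma(A^2)}|u(s)-P(s)|$ and similarly for $v$; these are standard for the self-adjoint functional calculus. Writing
\[
f(A)-p(A) = \bigl(u(A^2)-P(A^2)\bigr) + \bigl(v(A^2)-Q(A^2)\bigr)A,
\]
the triangle inequality together with $\norm{A}<\infty$ bounds $\norm{f(A)-p(A)}$ by $\delta(1+\norm{A})$, which is less than $\epsilon$ once $\delta$ is small. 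Here $p$ is the same analytic polynomial identified above, so a single choice of $P,Q$ simultaneously controls both quantities.

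The main obstacle I anticipate is bookkeeping rather than depth: one must verify carefully that the polynomial $q(\lambda)=P(|\lambda|^2)+Q(|\lambda|^2)\lambda$ really is of the form $\hat p$ for a legitimate analytic polynomial $p$, i.e.\ that the correspondence $p\mapsto\hat p$ given in \eqref{polap} is surjective onto $\mathcal{P}(r2)$, and that the operator $q(A)$ built by substituting $A$ agrees with $p(A)$ under the convention $f(A)=u(A^2)+v(A^2)A$. This amounts to observing that $A^{2k}=(A^2)^k$ and $A^{2k+1}=(A^2)^k A$, so substituting these into $p(A)=\sum_k\alpha_k A^k$ reproduces exactly $P(A^2)+Q(A^2)A$ with $P(s)=\sum_k\alpha_{2k}s^k$ and $Q(s)=\sum_k\alpha_{2k+1}s^k$. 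The only subtlety worth flagging is that the functional calculus bound for antilinear $A$ is invoked only through the complex linear operator $A^2$, so no separate antilinear spectral theorem is needed; one merely handles the extra factor $A$ by its norm. Once these identifications are in place the two estimates follow uniformly from the single Weierstrass approximation, completing the proof.
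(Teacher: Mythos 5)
Your proposal is correct and follows essentially the same route as the paper's own (very terse) proof: apply the Weierstrass approximation theorem to $u$ and $v$ on the compact set $\sigma(A^2)$, then transfer the uniform bounds to operator norm bounds via the continuous functional calculus for the complex linear positive semidefinite operator $A^2$, handling the extra factor $A$ by $\norm{A}$. The bookkeeping you flag (that $P(|\lambda|^2)+Q(|\lambda|^2)\lambda$ equals $\hat p$ for a genuine analytic polynomial $p$, via $A^{2k}=(A^2)^k$ and $A^{2k+1}=(A^2)^kA$) is exactly the identification the paper leaves implicit, and you verify it correctly.
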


\begin{proof}
The result follows by applying the Weierstrass approximation
theorem to $u$ and $v$ in \eqref{biradf} on the compact set $\sigma(A^2)$
and using the continuous function theory for the complex linear $A^2$.
\end{proof}





\begin{theorem}\label{spectmapthm1}
Let $A\in \mathcal{B}(H)$ be antilinear.
Assuming $f(A)$ is defined for $f$ in \eqref{lau}, there holds
$$\hat{f}(\sigma(A)) \subset \sigma(f(A)).$$
Moreover, if $A$ is self-adjoint, then this further holds for
all continuous biradial functions $\hat{f}:\sigma(A)\to\C$ in
\eqref{biradf}.
\end{theorem}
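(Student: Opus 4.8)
The plan is to fix $\lambda_0\in\sigma(A)$, set $\mu=\hat f(\lambda_0)$ and $s=|\lambda_0|^2$, and show $f(A)-\mu I$ is not boundedly invertible. Everything hinges on the antilinear identity
\[
A^2-sI=(A-\lambda_0 I)(A+\overline{\lambda_0}\,I)=(A+\overline{\lambda_0}\,I)(A-\lambda_0 I),
\]
together with Proposition \ref{HSprop}, which makes $A^2-sI$ noninvertible. Writing $f(A)=u(A^2)+v(A^2)A$ and using the divided-difference relations $u(A^2)-u(s)I=(A^2-sI)q_u(A^2)$ and $v(A^2)-v(s)I=(A^2-sI)q_v(A^2)$ from the (holomorphic, resp.\ continuous) functional calculus of the complex linear $A^2$, I would reorganize
\[
f(A)-\mu I=(A^2-sI)W+v(A^2)(A-\lambda_0 I),\qquad W=q_u(A^2)+\lambda_0 q_v(A^2).
\]
Since $W$ commutes with $A^2-sI$, pushing $(A-\lambda_0 I)$ to the right through $(A^2-sI)=(A+\overline{\lambda_0}\,I)(A-\lambda_0 I)$ gives the one-sided factorization $f(A)-\mu I=\Phi\,(A-\lambda_0 I)$ with $\Phi=W(A+\overline{\lambda_0}\,I)+v(A^2)$ bounded. (In the Laurent case $f(A)$ is the norm-convergent series and the manipulations remain valid under the annulus assumption.)

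If $A-\lambda_0 I$ is not bounded below, choose unit vectors $x_n$ with $(A-\lambda_0 I)x_n\to0$; boundedness of $\Phi$ yields $(f(A)-\mu I)x_n\to0$, so $f(A)-\mu I$ is not bounded below and hence noninvertible. Equivalently, the $x_n$ are approximate eigenvectors and $(f(A)-\mu I)x_n\to0$ follows directly as in \eqref{eua}.

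The delicate case is when $A-\lambda_0 I$ is injective and bounded below but not surjective (residual spectrum), which can occur for a non-self-adjoint antilinear $A$; I expect this to be the main obstacle. A symmetric left factorization $f(A)-\mu I=(A-\lambda_0 I)\Psi$ is here obstructed: the analogous reduction requires $A^2-sI$ to divide an expression whose value at $s$ equals $-2i\lambda_0\,\im v(s)$, nonzero unless $v(s)$ is real, a direct consequence of antilinearity conjugating the coefficient $v$. Instead I would pass to the real-adjoint: non-surjectivity of $A-\lambda_0 I$ forces $\overline{\lambda_0}$ to be a genuine eigenvalue of the antilinear operator $A^*$. By the circular symmetry of the eigenvalues of an antilinear operator, the whole circle $|\lambda|=\sqrt{s}$ consists of eigenvalues of $A^*$, and applying \eqref{eua} to $A^*$ (whose biradial form is $\overline{u}((A^*)^2)+v((A^*)^2)A^*$) shows $\{\overline{u}(s)+v(s)\lambda:|\lambda|=\sqrt{s}\}\subset\sigma(f(A)^*)$. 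Conjugating, via $\sigma(f(A)^*)=\overline{\sigma(f(A))}$, turns this into the circle centred at $u(s)$ of radius $|v(s)|\sqrt{s}$, which is exactly $\hat f(\{|\lambda|=\sqrt{s}\})$ and therefore contains $\mu$. The point $\mu$ is recovered only after this matching of circles.

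Finally, for self-adjoint $A$ and continuous biradial $\hat f$ as in \eqref{biradf}, the operator $A^2$ is self-adjoint and has no residual spectrum, so the bounded-below case always applies: approximate eigenvectors $x_n$ with $(A-\lambda_0 I)x_n\to0$ satisfy $(A^2-sI)x_n\to0$, whence $u(A^2)x_n\to u(s)x_n$ and $v(A^2)x_n\to v(s)x_n$ by the continuous functional calculus, giving $(f(A)-\mu I)x_n\to0$. Alternatively, by Lemma \ref{sapolyapprox} pick polynomials $p_n$ with $\norm{f(A)-p_n(A)}\to0$ and $\hat p_n\to\hat f$ uniformly on $\sigma(A)$; the polynomial case gives $\hat p_n(\lambda_0)\in\sigma(p_n(A))$, and since the invertible operators form an open set while $p_n(A)-\hat p_n(\lambda_0)I\to f(A)-\mu I$ in norm, the limit $\mu$ lies in $\sigma(f(A))$. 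Thus the self-adjoint extension is clean precisely because antilinearity's left-right asymmetry never surfaces when residual spectrum is absent.
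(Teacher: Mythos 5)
Your proposal is correct, and its overall skeleton matches the paper's: both split $\sigma(A)$ into an approximate-point part and a part detected through $A^*$; both handle the latter via genuine eigenvalues of $A^*$, circular symmetry, and a matching of circles under $\sigma(f(A)^*)=\overline{\sigma(f(A))}$ (your remark that $\mu$ is recovered only after matching whole circles, not pointwise, is exactly the subtlety behind the paper's phrase ``the corresponding circle of radius $|\lambda|$''); and both get the self-adjoint/continuous extension from Lemma \ref{sapolyapprox} by an approximation-and-limit argument. The genuine difference is the mechanism in the approximate-point case. The paper truncates the Laurent series to a rational function $r$, pushes an approximate eigenvector through $r(A)$ by continuity, and closes with an $\epsilon$-estimate on $\norm{f(A)-r(A)}$ and $\max_{\lambda\in\sigma(A)}|\hat{f}(\lambda)-\hat{r}(\lambda)|$. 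You instead produce the exact one-sided factorization $f(A)-\mu I=\Phi\,(A-\lambda_0 I)$ from the divided differences $q_u,q_v$ in the functional calculus of the complex-linear $A^2$ together with the identity $A^2-sI=(A+\overline{\lambda_0}I)(A-\lambda_0 I)$; this is approximation-free and transfers approximate eigenvectors in one line. What it costs is the requirement that $q_u(A^2)$ and $q_v(A^2)$ be defined, i.e.\ that \emph{all} of $\sigma(A^2)$ — which for non-self-adjoint antilinear $A$ need not lie in $[0,\infty)$ nor consist only of points $|\lambda|^2$ with $\lambda\in\sigma(A)$ — lies in the annulus where $u,v$ are holomorphic; under the root-test reading of ``$f(A)$ is defined'' in the paper's footnote this does follow, via Gelfand's formula applied to the complex-linear $A^2$, but it deserves an explicit sentence in a final write-up. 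Two smaller points: your derivation of the case split from the closed-range dichotomy (bounded below but not surjective forces $A^*y=\overline{\lambda_0}y$) is a nice self-contained substitute for the identity \eqref{aprop}, which the paper invokes without proof; and in the self-adjoint case your first sketch misplaces the reason — what is needed is that $A$ itself, not $A^2$, has empty residual spectrum, i.e.\ $\sigma(A)=\sigma_a(A)$, which follows from your own residual-case argument with $A^*=A$ plus circular symmetry of eigenvalues — but the alternative you give (part one applied to $p_n$, then closedness of the set of non-invertible operators) is complete, so nothing is lost.
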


\begin{proof} For any $B\in \mathcal{B}(H)$ holds
\begin{equation}\label{aprop}
\sigma_a(B)\cup \overline{\sigma_a(B^*)}=\sigma(B),
\end{equation}
 where
$\sigma_a(B)$ denotes the approximate point spectrum of $B$ and the
bar denotes complex conjugation (not closure).

Take $\lambda \in \sigma_a(A)$. Then for any given $\epsilon>0$ there
exists a unit vector $x\in H$ such that 
$Ax=\lambda x +v$ with $||v||\leq \epsilon$. Let $\theta \in \setR$.
Since $Ae^{i\theta}x=e^{-i\theta}\lambda x+e^{-i\theta}v$, we may conclude that
$\sigma_a(A)$ is circularly symmetric with respect to the origin.
Therefore by  \eqref{aprop}, also $\sigma(A)$ is 
circularly symmetric with respect to the origin.

Consider a rational function
$r(\lambda)=\sum_{k=-l}^{j}\alpha_k \lambda^k$ with $l,j\in \setN$.
(For this the identity \eqref{eua} holds analogously.)
By continuity, we have $r(A)x=\hat{r}(\lambda)x +w$, where $||w||$
can be made arbitrarily small by choosing $\epsilon$ small enough.
Consequently, $\hat{r}(\lambda)\in \sigma_a(r(A))$
whenever $\lambda\in \sigma_a(A)$. Regarding $f$, we have
$||f(A)-r(A)||\leq\hat{\epsilon}$ and
$\max_{\mu\in\sigma(A)} |\hat{f}(\mu) - \hat{r}(\mu)| \leq\hat{\epsilon}$
for any $\hat{\epsilon}>0$ by choosing
$l$ and $j$ large enough. Thereby 
\begin{align*}
\norm{f(A)x - \hat{f}(\lambda)x}
&\leq \norm{f(A)x - r(A)x} + \norm{r(A)x - \hat{r}(\lambda)x}
+ \norm{\hat{r}(\lambda)x - \hat{f}(\lambda)x} \\
&\leq 2\hat{\epsilon} + \norm{w},
\end{align*}
and it follows that $\hat{f}(\lambda)\in \sigma_a(f(A)).$
The case with self-adjoint $A$ is similar, only then we have a
polynomial in place of $r$
obtained by applying Lemma \ref{sapolyapprox}.

Similarly can be dealt with the case 
$\lambda \in \overline{\sigma_a(A^*)}$.
To see this, it suffices to consider the case of an eigenvalue of $A^*$
and the corresponding circle of radius $|\lambda|$ centred
at the origin. 
The claim follows from 
$r(A)^*=\sum_{k=-l}^j A^{*k}\overline{\alpha_k}=
\sum_{k=-\lfloor \frac{l}{2} \rfloor}^{\lfloor  \frac{j}{2} \rfloor }(\overline{\alpha_{2k}} +
\alpha_{2k+1}A^*)A^{*2k}$ after multiplying by the corresponding eigenvector.
\end{proof}

In general, the equality does not hold. For example, if the spectrum
of $A$ is empty, then with $\hat{f}(\lambda)=|\lambda|^2$ we have
$\hat{f}(\sigma(A)) = \emptyset$, but $\sigma(A^2)\not=\emptyset$.
The equality can be established in the following case,
whose proof will be postponed
until the end of Section \ref{infjacobisec}. 

\begin{theorem}\label{spectmapthm2}
Assume $A\in \mathcal{B}(H)$ is antilinear
and self-adjoint. Then, for all continuous biradial functions
$\hat{f}:\sigma(A)\to\C$ in \eqref{biradf},
\begin{equation}\label{spectmapeq}
\hat{f}(\sigma(A)) = \sigma(f(A)).
\end{equation}
\end{theorem}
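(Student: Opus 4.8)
The plan is to prove the nontrivial inclusion $\sigma(f(A))\subseteq\hat f(\sigma(A))$, since the reverse inclusion is already furnished by Theorem \ref{spectmapthm1}; combining the two yields the asserted equality \eqref{spectmapeq}. Throughout I would write $T=A^2$, which by the self-adjointness of $A$ is a complex linear, self-adjoint, positive semidefinite operator with $\sigma(T)=\sigma(A^2)\subset[0,\infty)$, so that $u(T)$ and $v(T)$ are well defined through the continuous functional calculus. Two elementary intertwining facts drive everything: $A$ commutes with $T$ (indeed $AT=A^3=TA$), and consequently, for every continuous $g$ on $\sigma(T)$, the antilinearity of $A$ gives $Ag(T)=g(T)^*A$, first for polynomials and then by uniform approximation. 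Using Proposition \ref{HSprop} I would then record the shape of the target set: since $\lambda\in\sigma(A)$ if and only if $|\lambda|^2\in\sigma(T)$, the set $\sigma(A)$ is the union of the circles $\{|\lambda|^2=s\}$ over $s\in\sigma(T)$, and hence $\mu\in\hat f(\sigma(A))$ precisely when the real valued function $\Phi(s):=|\mu-u(s)|^2-|v(s)|^2s$ vanishes at some $s\in\sigma(T)$.

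The heart of the argument is a factorization that removes the antilinear part. Writing $N=\mu I-f(A)=\bigl(\mu I-u(T)\bigr)-v(T)A$, I would introduce the companion operator $M=\bigl(\mu I-u(T)\bigr)^*+v(T)A$, obtained by taking the adjoint of the complex linear part and reversing the sign of the antilinear part. A direct computation, in which the intertwining relation moves $A$ past the functions of $T$ (conjugating them), the two antilinear cross terms cancel because functions of $T$ commute, and the product of the two antilinear terms collapses through $A^2=T$, gives
\[
NM=MN=\Phi(T),\qquad \Phi(s)=|\mu-u(s)|^2-|v(s)|^2 s,
\]
a complex linear function of $T$ alone. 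This identity is the crux of the matter.

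With the factorization in hand the conclusion is short. If $\mu\notin\hat f(\sigma(A))$, then $\Phi$ has no zero on the compact set $\sigma(T)$, so $|\Phi|$ is bounded below there and $\Phi(T)$ is boundedly invertible by the functional calculus for $T$. Then $M\Phi(T)^{-1}$ is a bounded two sided inverse of $N$, whence $\mu\notin\sigma(f(A))$. This establishes $\sigma(f(A))\subseteq\hat f(\sigma(A))$ and, together with Theorem \ref{spectmapthm1}, the equality \eqref{spectmapeq}.

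The main obstacle I anticipate is the verification of the factorization $NM=MN=\Phi(T)$: this is where antilinearity must be handled with care, since passing $A$ through $u(T)$ and $v(T)$ conjugates these functions, the sign choice in $M$ is exactly what annihilates the two antilinear cross terms, and the term quadratic in the antilinear part must be folded back into $T$ via $A^2=T$. Once this purely algebraic identity is confirmed, and the intertwining $Ag(T)=g(T)^*A$ is justified for continuous $g$ by polynomial approximation, the remaining steps are routine functional calculus. An alternative route would pass to the spectral decomposition of $T$ and realize $A$ fiberwise as $\sqrt{s}$ times a conjugation, computing the spectrum of each scalar model $\alpha I+c\tau$ to be the circle of radius $|c|$ about $\alpha$; but the factorization avoids invoking a full spectral theorem for antilinear operators and is therefore the approach I would pursue.
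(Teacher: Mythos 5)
Your proposal is correct, and it takes a genuinely different route from the paper. The paper proves the nontrivial inclusion $\supset$ only at the end of Section \ref{infjacobisec}, because its argument runs through the spectral machinery built there: $H$ is split into cyclic invariant subspaces \eqref{hdirectsum} via Proposition \ref{ortprop}, each restriction $A_\alpha$ is modelled by Theorem \ref{spectralthm} and Corollary \ref{spectralthmf} as the multiplication operator $u(|\lambda|^2)+v(|\lambda|^2)\lambda\tau$ on a biradial $L^2(\mu)$ space (which in turn needs the Favard-type Theorem \ref{boundedmu} and the density result Proposition \ref{polydenselemma}), and $\lambda_0 I-f(A)$ is then inverted fiberwise, uniformly in $\alpha$, with bound $1/c$ where $c=\min_{\lambda\in\sigma(A)}\bigl|\,|\lambda_0-u(|\lambda|^2)|-|v(|\lambda|^2)\lambda|\,\bigr|$. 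Your argument instead stays entirely at the level of Section \ref{sec2}: writing $T=A^2$, $N=\lambda_0 I-f(A)=(\lambda_0 I-u(T))-v(T)A$ and $M=(\lambda_0 I-u(T))^*+v(T)A$, the intertwining $Ag(T)=g(T)^*A$ yields
\[
NM=MN=\Phi(T),\qquad \Phi(s)=|\lambda_0-u(s)|^2-|v(s)|^2s,
\]
and I have checked this identity: the two cross terms cancel, $v(T)Av(T)A=|v|^2(T)T$, and your description of $\hat{f}(\sigma(A))$ as the set of $\lambda_0$ for which $\Phi$ vanishes somewhere on $\sigma(T)$ is exactly right, since by Proposition \ref{HSprop} the set $\sigma(A)$ is a union of circles $|\lambda|^2=s$, $s\in\sigma(T)$, on each of which $\hat{f}$ traces the circle of centre $u(s)$ and radius $|v(s)|\sqrt{s}$. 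So if $\lambda_0\notin\hat{f}(\sigma(A))$, then $|\Phi|$ is bounded below on the compact nonempty set $\sigma(T)$, $\Phi(T)$ is boundedly invertible by the continuous functional calculus, and $M\Phi(T)^{-1}$ is a bounded two-sided inverse of $N$ (two-sided indeed, since $\Phi$ is real valued, so $\Phi(T)^{-1}$ commutes with both the $T$-functions and $A$; alternatively, note $M\Phi(T)^{-1}$ is a right inverse and $\Phi(T)^{-1}M$ a left inverse, so they coincide). The two proofs are cousins at heart: the paper's pointwise quantity is a factor of your $\Phi$, as $\Phi(s)=(|\lambda_0-u(s)|-|v(s)|\sqrt{s})(|\lambda_0-u(s)|+|v(s)|\sqrt{s})$, and your $M$ is the operator analogue of the conjugate map one uses to invert a scalar real-linear map $z\mapsto\alpha z+\beta\bar{z}$. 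What they buy is different: your factorization is elementary and self-contained, requires only the functional calculus of the positive complex linear operator $A^2$, and could be placed immediately after Theorem \ref{spectmapthm1}; the paper's route obtains the theorem as a byproduct of the multiplication-operator model of Theorem \ref{spectralthm}, which is the real objective of Section \ref{sec3} and simultaneously delivers the norm identity of Corollary \ref{spectradprop}.
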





\begin{corollary}\label{spectradprop}
Assume $A\in \mathcal{B}(H)$ is antilinear
and self-adjoint. Then
\[\norm{f(A)} = \max_{\lambda\in\sigma(A)} |\hat{f}(\lambda)|
= \max_{\lambda\in\sigma(f(A))} |\lambda|.\]
\end{corollary}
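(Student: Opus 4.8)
The plan is to derive the norm identity directly from the self-adjoint spectral mapping equality in Theorem~\ref{spectmapthm2}, which gives $\hat{f}(\sigma(A))=\sigma(f(A))$. The second equality in the corollary,
\[
\max_{\lambda\in\sigma(A)}|\hat{f}(\lambda)| = \max_{\lambda\in\sigma(f(A))}|\lambda|,
\]
is then immediate: applying the modulus and taking the maximum over the two equal sets yields the same number, since $\max_{\lambda\in\sigma(A)}|\hat f(\lambda)| = \max_{\mu\in\hat f(\sigma(A))}|\mu| = \max_{\mu\in\sigma(f(A))}|\mu|$. So the real content is the first equality, $\norm{f(A)} = \max_{\lambda\in\sigma(f(A))}|\lambda|$, i.e.\ that the operator norm of $f(A)$ equals its spectral radius.

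First I would observe that $f(A)=u(A^2)+v(A^2)A$ is itself an antilinear operator (since $u(A^2),v(A^2)$ are complex linear and $A$ is antilinear), and that it is self-adjoint because $A$ is self-adjoint and $u,v$ are being applied in the biradial calculus consistent with the self-adjoint structure. For a self-adjoint antilinear operator $B$, I would argue the norm equals the spectral radius by passing to the complex linear square: $\norm{B}^2 = \norm{B^*B} = \norm{B^2}$, where $B^2$ is complex linear, positive semidefinite, and self-adjoint. For such an operator the norm is its spectral radius, $\norm{B^2}=\max_{s\in\sigma(B^2)}s$. Then Proposition~\ref{HSprop}, applied to the antilinear operator $B=f(A)$, converts this back: $s\in\sigma(B^2)$ iff $s=|\lambda|^2$ for some $\lambda\in\sigma(B)$, so $\max_{s\in\sigma(B^2)}s = \max_{\lambda\in\sigma(B)}|\lambda|^2$, and taking square roots gives $\norm{f(A)}=\max_{\lambda\in\sigma(f(A))}|\lambda|$.

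The step that deserves the most care, and the likely main obstacle, is justifying $\norm{B}^2=\norm{B^2}$ and the self-adjointness of $f(A)$ in the antilinear setting. The identity $\norm{B}^2=\norm{B^*B}$ is the $C^*$-style equality that holds for the real-part inner product norm on $\mathcal{B}(H)$, and one must confirm $B^*=B$ forces $B^*B=B^2$; here I would lean on the fact, established earlier, that $A^2$ is complex linear and positive semidefinite and that the continuous biradial calculus is consistent with $\sigma(A^2)$. Once $f(A)$ is known to be self-adjoint antilinear, everything reduces to the complex linear positive operator $f(A)^2$, for which the norm-equals-spectral-radius fact is classical. I would close by noting both displayed equalities have now been verified, the first via the square-and-Proposition~\ref{HSprop} argument and the second as a direct consequence of Theorem~\ref{spectmapthm2}.
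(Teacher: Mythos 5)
Your second equality (via Theorem~\ref{spectmapthm2}) is fine and agrees with the paper, but your proof of the first equality rests on a false claim. You assert that $f(A)=u(A^2)+v(A^2)A$ is antilinear and self-adjoint; it is neither, in general. The operator $u(A^2)$ is \emph{complex linear} while $v(A^2)A$ is antilinear, so $f(A)$ is genuinely real linear with both parts nonzero whenever $u\not\equiv 0$ --- this is precisely the ``first notable difference'' the paper emphasizes in Section~\ref{sec2}. A minimal counterexample is $u\equiv i$, $v\equiv 0$: then $f(A)=iI$ is complex linear, and with respect to the real inner product $(iI)^*=-iI\neq iI$, so it is not self-adjoint either. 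Every step of your reduction then breaks down: since $f(A)^*\neq f(A)$ you cannot pass from $\norm{B}^2=\norm{B^*B}$ to $\norm{B^2}$; the square need not be positive semidefinite (for $B=iI$, $B^2=-I$); and Proposition~\ref{HSprop} is a statement about \emph{antilinear} operators only, so it may not be applied to $B=f(A)$ (for $B=iI$ one has $\sigma(B^2)=\{-1\}$, while $|\lambda|^2=1$ for $\lambda\in\sigma(B)=\{i\}$, so the equivalence fails). Your argument is sound exactly in the special case $u\equiv 0$, where $f(A)=v(A^2)A$ is indeed self-adjoint antilinear, but that does not cover the corollary.

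The paper avoids manipulating $f(A)$ abstractly. It decomposes $H$ into cyclic invariant subspaces as in \eqref{hdirectsum}--\eqref{adirectsum}, so that $\norm{f(A)}=\sup_\alpha\norm{f(A_\alpha)}$, and then invokes the spectral representation of Corollary~\ref{spectralthmf}: on $L^2(\mu)$ the operator $f(A_\alpha)$ acts as $h\mapsto u(|\lambda|^2)h+v(|\lambda|^2)\lambda\overline{h}$, whence the pointwise bound by $\left(|u(|\lambda|^2)|+|v(|\lambda|^2)\lambda|\right)|h(\lambda)|$ and the estimate $\norm{f(A_\alpha)}\leq\sup_{\lambda\in\sigma(A_\alpha)}|\hat{f}(\lambda)|$; here the circular symmetry of $\sigma(A_\alpha)$ (together with Proposition~\ref{spectrumsupprop}, which places the support of $\mu$ inside $\sigma(A_\alpha)$) is what lets the sum of moduli be dominated by $\sup|\hat{f}|$, since on each origin-centred circle in the spectrum the supremum of $|\hat{f}|$ equals $|u|+|v\lambda|$. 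The reverse inequality is the elementary ``spectral radius $\leq$ norm'' fact, obtained from a Neumann series argument for $\lambda I_\alpha-f(A_\alpha)$ combined with the equality $\hat{f}(\sigma(A))=\sigma(f(A))$ of Theorem~\ref{spectmapthm2}. If you want a correct proof, this $L^2$ multiplication-operator route is the one to take; there is no way around the fact that $f(A)$ leaves the antilinear class.
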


For an illustration, 
Hankel operators yield an immediate nontrivial family of 
self-adjoint antilinear operators.

\smallskip

\begin{example}\label{hankel} 
Hankel operators \cite{PEL} constitute a
natural  family of self-adjoint antilinear  operators once
treated as follows. (See also \cite[Sec. 7 and 8]{PUTI}.)
Denote
by $H^2(\mathbf{D})$ the Hardy space, where $\mathbf{D}$ is
the unit disc.  Let $a\in L^{\infty}(\mathbf{T})$, 
where $\mathbf{T}$
denotes the unit circle and $P:L^2(\mathbf{T})\rightarrow H^2(\mathbf{D})$
is the orthogonal projector onto $H^2(\mathbf{D})$.\footnote{Recall
that  $H^2(\mathbf{D})$ can be identified with those elements
of $L^2(\mathbf{T})$ which have vanishing Fourier coefficients
for negative indices.}
Then  define
$$g\longmapsto PM_a\overline{g}$$
on $H^2(\mathbf{D})$, 
where $M_a$ is the multiplication operator $g \mapsto ag$.
Represented on $l^2$, we have $H\tau$, where $H$
is an  infinite Hankel matrix.\footnote{This antilinear treatment
leads to different problems. For instance,
unitary similarity for  $H\tau$ becomes
unitary consimilarity for $H$.
We are only aware of the unitary similarity problem for $H$
\cite{MPT}.}
\end{example}

\smallskip

Like Hankel operators, self-adjoint antilinear operators have 
also been studied in a complex linear setting
more generally; see \cite{GP,GPII} where also many examples 
are given. 

We do not know how to completely characterize those antilinear operators
for which the spectral mapping Theorem \ref{spectmapthm1}
holds with 
equality. However, we {\it conjecture}
that these are precisely those antilinear $A$ for which
$\sigma(A^2) \subset [0,\infty)$. Necessity follows by
taking $f(\lambda)=\lambda^2$ and using Proposition \ref{HSprop}.
In finite dimensional $H$, sufficiency is established by the notion
of contriangularizability, which is equivalent to
$\sigma(A^2) \subset [0,\infty)$ \cite{HJ1}.
(For the probability
of being contriangularizable, see
\cite{HP}.)

\smallskip

\begin{example} Suppose an antilinear $A\in \mathcal{B}(H)$ is such that
$\sigma(A^2) \subset [0,\infty)$.
Then also Gelfand's formula 
$$\lim_{j \rightarrow \infty} ||A^j||^{1/j}= \max_{\lambda \in \sigma(A)}|\lambda |$$
holds.
\end{example}

\section{Antilinear Jacobi operators
and $L^2$ theory for $\mathcal{C}(r2)$}\label{sec3}

Like in the complex linear case,
the spectrum is intimately related with the notion of invariant subspace.
That is, if an antilinear operator has an eigenvector, then
its span yields an invariant subspace. By invariance is meant the 
following. 

\begin{definition} 
A subspace $K$ of $H$ is said to be invariant for an
operator $B\in \mathcal{B}(H)$ 
if $BK\subset K$.
\end{definition}

For an antilinear
$A\in \mathcal{B}(H)$  and $x \in H$, 
form the invariant subspace
\begin{equation}\label{krylov}
K(A;x)=\overline{\set{p(A)x}{p\in \mathcal{P}}}, 
\end{equation}
where $\mathcal{P}$ denotes the set of polynomials.
If $x$ is an eigenvector, then $K(A;x)$ is one dimensional. For the other
extreme,
if $K(A;x)=H$,
then $x$ is said to be a cyclic vector for $A$, like
in the complex linear case.

An orthonormal basis of $K(A;x)$ can be generated analogously
to the finite dimensional case described in \cite{EHVP}.
If the dimension of \eqref{krylov} is infinite, 
then $A$ is  represented as
$$\alp{H}\tau :l^2(\setN) \rightarrow
l^2(\setN),$$
where the (infinite) matrix $\alp{H}$ is of 
Hessenberg type with real subdiagonal
entries and $\tau$ denotes
the standard conjugation operation on $l^2(\setN)$. If
the dimension of \eqref{krylov} is finite, say $n$,
replace $l^2(\setN)$ with $\setC^n$.
In case $A$ is self-adjoint, $\alp{H}$ is a tridiagonal complex symmetric Jacobi matrix.
In this way complex Jacobi operators arise naturally in antilinear
operator theory.

\begin{proposition}\label{ortprop}
Let $A\in\mathcal{B}(H)$ be antilinear and self-adjoint,
and let $x,y\in H$. If $y \perp K(A;x)$, then $K(A;y) \perp K(A;x)$.
\end{proposition}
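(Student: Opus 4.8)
The plan is to push the whole problem onto a single algebraic identity for self-adjoint antilinear operators and then argue at the level of invariant subspaces. First I would record the basic structure: since polynomials have complex coefficients, both $K(A;x)=\overline{\mathrm{span}}\{A^k x:k\ge 0\}$ and $K(A;y)=\overline{\mathrm{span}}\{A^k y:k\ge 0\}$ are closed \emph{complex}-linear subspaces of $H$. On a complex subspace real orthogonality and complex orthogonality coincide: if $\mathrm{Re}(y,z)=0$ for all $z$ in the subspace, then replacing $z$ by $iz$ kills the imaginary part too, since $\mathrm{Re}(y,iz)=\mathrm{Im}(y,z)$. Hence the hypothesis $y\perp K(A;x)$ is equivalent to $(y,A^k x)=0$ for every $k\ge 0$, where $(\cdot,\cdot)$ denotes the complex inner product.

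The key step is the identity
\[(Au,v)=\overline{(u,Av)}\qquad\text{for all }u,v\in H,\]
valid because $A$ is antilinear and self-adjoint. I would obtain it by splitting into real and imaginary parts. The real part is exactly the defining relation $(Au,v)_{\mathbb R}=(u,A^*v)_{\mathbb R}=(u,Av)_{\mathbb R}$. For the imaginary part I would test against $iv$ and use antilinearity $A(iv)=-iAv$: one computes $\mathrm{Im}(Au,v)=(Au,iv)_{\mathbb R}$ and $-\mathrm{Im}(u,Av)=(u,-iAv)_{\mathbb R}=(u,A(iv))_{\mathbb R}$, so the self-adjointness relation applied to the pair $u,iv$ yields $\mathrm{Im}(Au,v)=-\mathrm{Im}(u,Av)$. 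Combining the two parts gives the displayed identity. This bookkeeping between the real inner product in the definition of the adjoint and the complex inner product is the one place that needs care, and is really the crux of the argument.

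With the identity in hand the conclusion is immediate. I would show that the orthogonal complement of any closed $A$-invariant subspace is again $A$-invariant: if $w\perp K(A;x)$ and $z\in K(A;x)$, then $(Aw,z)=\overline{(w,Az)}=0$ because $Az\in K(A;x)$ by invariance, so $Aw\perp K(A;x)$. Since $K(A;x)$ is $A$-invariant (as noted in the text), its orthogonal complement $K(A;x)^{\perp}$ is a closed, $A$-invariant complex subspace. The hypothesis places $y$ in $K(A;x)^{\perp}$, hence $A^k y\in K(A;x)^{\perp}$ for all $k$, and taking the closed span gives $K(A;y)\subset K(A;x)^{\perp}$; that is, $K(A;y)\perp K(A;x)$.

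Alternatively, and equivalently, one can avoid invariant complements and compute directly: iterating $(A^{j}y,A^{k}x)=\overline{(A^{j-1}y,A^{k+1}x)}$ moves every factor of $A$ onto $x$, so that $(A^{j}y,A^{k}x)$ equals $(y,A^{j+k}x)$ up to an overall conjugation and therefore vanishes for all $j,k$; orthogonality of the spanning vectors then extends to the closed spans by continuity of the inner product. I expect the only real obstacle to be the correct derivation of the sesquilinear identity; everything after that is routine Hilbert-space manipulation.
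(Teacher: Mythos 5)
Your proof is correct, and it takes a recognizably different route from the paper's. The paper's proof works at the level of whole polynomials: it writes $q(\lambda)=u(\lambda^2)+v(\lambda^2)\lambda$, exploits the fact that $A^2$ is $\C$-linear self-adjoint (so $u(A^2)$, $v(A^2)$ can be moved across the inner product by the ordinary complex functional calculus), and handles the single remaining antilinear factor with the conjugation rule $(w,Az)=\overline{(Aw,z)}$ --- thereby reducing $(p(A)x,q(A)y)$ in one stroke to terms of the form $(\widetilde{p}(A)x,y)$ and $\overline{(\widetilde{q}(A)x,y)}$, which vanish by hypothesis. You instead isolate exactly that conjugation rule as the key identity $(Au,v)=\overline{(u,Av)}$, derive it carefully from the real-inner-product definition of the adjoint (a point the paper leaves implicit), and then argue structurally: the orthogonal complement of an $A$-invariant subspace is $A$-invariant, so $y\in K(A;x)^{\perp}$ forces $K(A;y)\subset K(A;x)^{\perp}$. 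What your version buys is self-containedness and explicitness on the two bookkeeping issues the paper glosses over --- that real and complex orthogonality agree on the complex subspace $K(A;x)$, and how the real-linear adjoint interacts with the complex inner product --- plus a reusable structural fact (invariance of orthogonal complements) that is in effect what justifies the decomposition \eqref{hdirectsum} later in the paper. What the paper's version buys is thematic coherence: the even/odd splitting $u(\lambda^2)+v(\lambda^2)\lambda$ is exactly the $\mathcal{C}(r2)$ structure the whole paper is built around, so its proof doubles as a small demonstration of how that calculus is used. Your closing direct computation, iterating $(A^{j}y,A^{k}x)=\overline{(A^{j-1}y,A^{k+1}x)}$ to push all powers onto $x$, is also valid and is arguably the shortest complete argument of the three.
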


\begin{proof}
Let $p$ and $q$ be polynomials, and denote
$q(\lambda) = u(\lambda^2) + v(\lambda^2)\lambda$, where $u$ and
$v$ are polynomials. Then
\begin{align*}
(p(A)x,q(A)y) &= (p(A)x,u(A^2)y) + (p(A)x,v(A^2)Ay) \\
&=(\overline{u}(A^2)p(A)x,y) + \overline{(A\overline{v}(A^2)p(A)x,y)} \\
&=(\widetilde{p}(A)x,y) + \overline{(\widetilde{q}(A)x,y)} = 0,
\end{align*}
where we have used the fact that $A^2$ is $\C$-linear self-adjoint
and $\widetilde{p},\,\widetilde{q}$ are some polynomials.
\end{proof}

By Proposition \ref{ortprop},
given a bounded self-adjoint antilinear operator $A$ on $H$,
we can express $H$ as an orthogonal direct sum of invariant subspaces
\begin{equation}\label{hdirectsum}
H = \bigoplus_{\alpha} K(A;x_\alpha).
\end{equation}
We denote the restriction of $A$ to these subspaces by
\begin{equation}\label{adirectsum}
A_\alpha = A|_{K(A;x_\alpha)}.
\end{equation}
We have
\begin{equation}\label{spectcup}
\sigma(A) = \overline{\bigcup_\alpha \sigma(A_\alpha)},
\end{equation}
since $\sigma(A^2) = \overline{\bigcup_\alpha \sigma(A_\alpha^2)}$,
and we apply Proposition \ref{HSprop} noting that
$\sigma(A)$ and $\sigma(A_\alpha)$ are circularly symmetric
with respect to the origin.

With these preliminaries, next we show that
there exists a correspondence between self-adjoint antilinear Jacobi operators
and positive measures on the plane, much as in complex
linear operator theory. The construction relies on $L^2$ theory
for $\mathcal{C}(r2)$ by orthogonalizing polynomials \eqref{polap}
supported on these measure spaces.
Since the invariant subspaces $K(A;x)$ may be finite or
infinite dimensional, we shall deal with them separately, starting with the
finite dimensional case. Then, once we have handled the infinite dimensional
case, Theorem \ref{spectmapthm2} will be proved.

It is instructive to bear in mind that, as described in Example
\ref{hankel},  everything applies to Hankel operators.

\subsection{Biradial measures with finite support and antilinear Jacobi operators on 
finite dimensional spaces}\label{bifisu}
In what follows it is shown, by partly following \cite{HP},
 that there exists a correspondence
between (discrete) biradial measures and antilinear Jacobi
operators.
For convenience,  the following notation for
the monomials appearing in Definition \ref{defpol} is employed.
\begin{definition}\label{polynot}
Let $k \in \setN$. Denote by $\pow{\lambda}{k}$ the monomials defined by
$$\pow{\lambda}{k} =
\begin{cases}
\lambda |\lambda|^{2j}, &\text{if $k$ is odd and $k=2j+1$}, \\
|\lambda|^k, &\text{if $k$ is even}.
\end{cases}$$
\linebreak
\end{definition}

To put it short, biradiality means the second item in the following definition.

\begin{definition}\label{finitemeas}
Assume $n\in \setN$.  Let
$\rho_k > 0,\,\lambda_k\in\C \quad(k=1,\dots,n)$ be such that
\begin{enumerate}
\item[(i)] $\sum_{k=1}^n \rho_k = 1$,
\item[(ii)] $\lambda_1,\dots,\lambda_n$ are distinct
and any origin centred circle intersects at most two of them.
\end{enumerate}
Furthermore, the complex numbers $\lambda_1,\dots,\lambda_n$ 
are assumed to be ordered such that
\begin{alignat*}{2}
|\lambda_{2k-1}| &= |\lambda_{2k}| && \text{for } k = 1,\dots,m, \\
|\lambda_{2k-1}| &< |\lambda_{2k+1}| && \text{for } k = 1,\dots,m-1, \\
|\lambda_{2m+1}| < |\lambda_{2m+2}| &< \cdots < |\lambda_n|
\end{alignat*}
for $m\in \setN$.
Then a positive measure on the plane defined by
\begin{equation}
\rho = \sum_{k=1}^n \rho_k \delta_{\lambda_k}
\end{equation}
is called a biradial measure with finite support.
\end{definition}

Assume $\rho$ is a biradial measure with finite support. 
On $\mathcal{P}(r2)$ let us use the $L^2$ inner product 
\begin{equation}\label{innerp}
\inprod{p,q} = \int_{\C} p\overline{q}\,d\rho =
\sum_{k=1}^n p(\lambda_k) \overline{q(\lambda_k)} \rho_k.
\end{equation}
Consider
the monomials $1,$ $\pow{\lambda}{1},$ $\pow{\lambda}{2},$ $\dots,$
$\pow{\lambda}{n-1}$. 
Executing the Gram-Schmidt orthogonalization process 
with respect to this
the inner product  yields an orthonormal sequence
of polynomials $p_0,p_1,\dots,p_{n-1}$ such that each $p_j$ has degree $j$.
This is a consequence of the following proposition  guaranteeing
that $\norm{p}^2 = \inprod{p,p} > 0$ for all $p \in \mathcal{P}(r2)$
with degree less than $n$.
\begin{proposition}[\cite{HP}]\label{zeroprop}
Let $p \in \mathcal{P}_d(r2)$ be nonzero. The following claims hold:
\begin{enumerate}
\item\label{zerit1} If $p$ has two distinct zeroes of the same modulus,
then all numbers of that modulus are zeroes.
\item Let $m$ be the number of nonzero moduli for which all numbers of
that modulus are zeroes and let $s$ be the number of moduli for which
exactly one number is a zero. Then $2m+s \leq d$.
\end{enumerate}
\end{proposition}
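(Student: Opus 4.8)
The plan is to reduce both claims to elementary statements about the two ordinary one-variable polynomials out of which $p$ is built. Writing $p(\lambda)=u(|\lambda|^2)+v(|\lambda|^2)\lambda$ with $u,v$ polynomials in the real variable $t=|\lambda|^2$, the degree of $p$ equals $\max(2\deg u,\,2\deg v+1)$; the two competing quantities have opposite parity, a fact I will lean on more than once. Throughout I regard a ``full circle of zeroes'' and a ``single zero'' on a fixed circle as the only two nontrivial possibilities, which is justified by the first claim.

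For the first claim, fix a modulus $r$ and restrict to the circle $|\lambda|=r$, on which $p(\lambda)=u(r^2)+v(r^2)\lambda$ is affine in $\lambda$. If two distinct points $\lambda_1\neq\lambda_2$ of this circle are zeroes --- which forces $r>0$ --- then subtracting $u(r^2)+v(r^2)\lambda_1=0$ from $u(r^2)+v(r^2)\lambda_2=0$ gives $v(r^2)(\lambda_2-\lambda_1)=0$, hence $v(r^2)=0$ and then $u(r^2)=0$. Thus $p$ vanishes identically on the circle. This is precisely the biradial uniqueness already recorded in the Example following \eqref{functjoukko}.

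For the second claim I introduce the real auxiliary polynomial $g(t)=u(t)\overline{u}(t)-t\,v(t)\overline{v}(t)$, where $\overline{u},\overline{v}$ are the polynomials obtained by conjugating the coefficients; $g$ has real coefficients, and because the degrees $2\deg u$ and $2\deg v+1$ have opposite parity the leading terms cannot cancel, so $\deg g$ equals the degree of $p$ and $g\not\equiv0$ whenever $p\neq0$. The crucial observation is that, for every $r\geq0$, the circle $|\lambda|=r$ carries a zero of $p$ if and only if $g(r^2)=0$: when $v(r^2)\neq0$ the only candidate $\lambda=-u(r^2)/v(r^2)$ has modulus $|u(r^2)|/|v(r^2)|$, which equals $r$ exactly when $g(r^2)=0$, yielding one zero; when $v(r^2)=0$ one has $g(r^2)=|u(r^2)|^2$, and a zero occurs (the whole circle, or the origin if $r=0$) iff $u(r^2)=0$.

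It remains to bound multiplicities. Distinct moduli give distinct roots $t=r^2$ of $g$ in $[0,\infty)$. A nonzero modulus $r$ whose entire circle consists of zeroes forces $u(r^2)=v(r^2)=0$, so $(t-r^2)^2$ divides both $u\overline{u}$ and $t\,v\overline{v}$ and hence divides $g$: each of the $m$ full circles contributes a root of $g$ of multiplicity at least two. Each modulus carrying exactly one zero --- including the origin, where the condition is simply $u(0)=0$ --- contributes a root of multiplicity at least one, accounting for $s$. Since these roots are distinct and $g$ has degree at most $d$, summing the multiplicities yields $2m+s\leq\deg g\leq d$. The part I expect to require the most care is this degree bookkeeping: verifying via the parity argument that no leading-term cancellation occurs in $g$, confirming that a full circle genuinely produces a double root (through $u$ and $v$ sharing the root $r^2$), and keeping the origin straight, since it may contribute to $s$ but, being a single point, never to $m$.
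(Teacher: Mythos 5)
Your proof is correct. Note that the paper itself offers no proof of this proposition: it is quoted from the reference \cite{HP}, so the only comparison available is with the argument given there, which (as in your write-up) rests on the same two natural ingredients: restriction of $p(\lambda)=u(|\lambda|^2)+v(|\lambda|^2)\lambda$ to a fixed circle, where it becomes affine in $\lambda$, and a zero-counting bound for an associated ordinary polynomial in the radial variable. Your execution of both steps is sound. Claim 1 is the two-point linear-algebra argument, identical to the biradial uniqueness example in the paper. For Claim 2, the auxiliary polynomial $g(t)=u(t)\overline{u}(t)-t\,v(t)\overline{v}(t)$ is exactly the right object: it has real coefficients, its degree equals the degree of $p$ because $2\deg u$ and $2\deg v+1$ have opposite parity (so the leading terms cannot collide, let alone cancel), a circle of radius $r$ meets the zero set of $p$ precisely when $g(r^2)=0$, a full circle of zeroes forces $u(r^2)=\overline{u}(r^2)=v(r^2)=\overline{v}(r^2)=0$ and hence a root of $g$ of multiplicity at least two, and the $m$-moduli and $s$-moduli are disjoint so the multiplicities add. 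Your care with the origin (it can only ever be an $s$-type modulus, contributing a simple root through $u(0)=0$) closes the one place where the bookkeeping could have slipped.
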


Consider the product \eqref{tulocr2}.
Expressing the polynomial 
$\lambda\tau p_j(\lambda)=\lambda \overline{p_j(\lambda)}$ as
a linear combination of $p_0,\dots,p_{j+1}$ by imposing
orthogonality gives rise to the three term recurrence
\begin{equation}
\label{psrecurs}
\beta_{j+1} p_{j+1}(\lambda) = \lambda\overline{p_j(\lambda)} - \alpha_{j+1}
p_j(\lambda) - \beta_j p_{j-1}(\lambda),
\qquad (j=0,\dots,n-2),
\end{equation}
where $p_{-1}(\lambda) \equiv 0$, $\alpha_{j+1}
=\inprod{\lambda\overline{p_j},p_j}$ and
$\beta_{j+1} = \inprod{\lambda \overline{p_j},p_{j+1}}$.
Observe that $\beta_{j+1} > 0$ since the leading term of
$p_{j+1}$ has positive coefficient, a consequence
of executing  the Gram-Schmidt process. 
Proposition \ref{zeroprop}, the $L^2$ space 
$(\mathcal{P}_n(r2),\inprod{\cdot,\cdot})$
is $n$ dimensional.
Therefore $\lambda \overline{p_{n-1}(\lambda)}$
is a linear combination of $p_0,\dots,p_{n-1}$  
and hence $\beta_n=0$.
In this way
we have associated these so-called called Jacobi parameters
$\alpha_1,\dots,\alpha_n \in\C$ and
$\beta_1,\dots,\beta_{n-1} > 0$ with the given biradial measure $\rho$.

To express this linear algebraically, let $Q=(q_{kj}) \in \Cnxn$ be the unitary matrix with columns $j$ defined by
\begin{equation}\label{Qpolyn}
q_{kj} = \sqrt{\rho_k} p_{j-1}(\lambda_k), \qquad (k=1,\dots,n).
\end{equation}
Then, by using the recursion
\eqref{psrecurs}, we get
\begin{equation}\label{Jdecomp}
\alp{D}\overline{Q} = Q\alp{J},
\end{equation}
where
\begin{align}\label{jtri}
\alp J &= \left[ \begin{array}{ccccc} 
\alpha_1&\beta_1&0&\cdots&0\\
\beta_1&\alpha_2&\ddots&\cdots&0 \\
\vdots&\ddots&\ddots& \ddots&\vdots \\
0&\ldots&&\alpha_{n-1}&\beta_{n-1}\\
0&\ldots&0&\beta_{n-1}&\alpha_n
\end{array} \right], \\
\label{jdiag}
\alp D &= \diag(\lambda_1, \dots, \lambda_n).
\end{align}
In this way, given a biradial measure with finite support, we have
the corresponding complex Jacobi matrix \eqref{jtri}.

Suppose, conversely,
that we are given a complex Jacobi matrix \eqref{jtri} and we want
to find a corresponding biradial measure. We start with the following
proposition.
\begin{proposition}
Let $\alpha_1,\dots,\alpha_n\in\C$ and let $\beta_1,\dots,\beta_{n-1} > 0$.
Then there exist numbers $\lambda_1,\dots,\lambda_n\in\C$ and
a unitary matrix $Q\in\Cnxn$ with positive first column
such that the equation \eqref{Jdecomp}
is satisfied, when $\alp J$ and $\alp D$ are
defined by formulae \eqref{jtri} and \eqref{jdiag}. Furthermore,
the numbers $\lambda_1,\dots,\lambda_n$ satisfy the property (ii) in
Definition \ref{finitemeas}.
\end{proposition}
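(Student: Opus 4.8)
The plan is to read equation~\eqref{Jdecomp} as a diagonalization statement. Since $\alp J$ is complex symmetric, the antilinear operator $A=\alp J\tau$ on $\C^n$, $x\mapsto\alp J\overline{x}$, is self-adjoint, and $\alp D\overline Q=Q\alp J$ with $Q$ unitary is equivalent to $QAQ^{*}=\alp D\tau$; that is, $A$ is unitarily equivalent to the antilinear diagonal operator $x\mapsto\alp D\overline{x}$. So I would (i) produce one such $Q$, (ii) show that the attainable moduli $|\lambda_k|$ force property (ii) of Definition~\ref{finitemeas}, and (iii) adjust phases to normalize the first column.

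For existence I would apply the Autonne--Takagi factorization~\cite{HJ1} to the complex symmetric $\alp J$, writing $\alp J=V\Sigma V^{T}$ with $V$ unitary and $\Sigma=\operatorname{diag}(\sigma_1,\dots,\sigma_n)$, $\sigma_k\ge 0$ the singular values. A direct computation then shows that $Q_0=V^{*}$ and $\alp D_0=\Sigma$ satisfy~\eqref{Jdecomp}. Note also that $A^2=\alp J\overline{\alp J}=\alp J\alp J^{*}$ is Hermitian positive semidefinite with eigenvalues $\sigma_k^2$, so by Proposition~\ref{HSprop} the moduli of the $\lambda_k$ are forced to be the singular values $\sigma_k$; only their phases remain free.

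The hard part will be the biradiality bound: every positive $\sigma_k$ occurs with multiplicity at most two, and $\sigma=0$ at most once. I would first note that the positive subdiagonal makes $e_1$ cyclic for $A$: the vectors $e_1,Ae_1,A^2e_1,\dots$ are triangular in the standard basis with leading coefficients $\beta_1\cdots\beta_k>0$, so $\{A^{k}e_1\}$ is a basis. Sorting even and odd powers gives $\C^n=\C[A^2]\,e_1+\C[A^2]\,(Ae_1)$. Projecting orthogonally onto an eigenspace $W_r=\ker(A^2-r^2I)$ of the Hermitian $A^2$, each of the two $A^2$-cyclic summands contributes at most one dimension, whence $\dim W_r\le 2$; this bounds the multiplicity of each positive singular value by two. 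For $r=0$ I would argue directly from the Jacobi structure: deleting the first row and last column of $\alp J$ leaves a triangular matrix with diagonal $\beta_1,\dots,\beta_{n-1}\ne 0$, so $\operatorname{rank}\alp J\ge n-1$ and $\dim\ker A^2=\dim\ker\alp J^{*}\le 1$.

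Finally I would normalize the phases. The equivalence $Q_0AQ_0^{*}=\alp D_0\tau$ carries the cyclic vector $e_1$ to the cyclic vector $w=Q_0e_1$, the first column of $Q_0$, for $\alp D_0\tau$; cyclicity forces $w_k\ne 0$ for all $k$ and, on each two-dimensional eigenspace, forces the two relevant entries to have arguments differing modulo $\pi$. Replacing $Q_0$ by $Q=\Phi Q_0$ with the diagonal unitary $\Phi=\operatorname{diag}(|w_k|/w_k)$ makes the first column positive while changing $\alp D_0$ into $\alp D=\operatorname{diag}(\phi_k^2\sigma_k)$. Since phases leave moduli unchanged, the multiplicity bound still yields at most two $\lambda_k$ on any origin centred circle, and the argument-difference just noted makes the two values sharing a positive modulus distinct, while the single possible zero is automatically distinct from the rest. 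Hence the $\lambda_k$ are distinct and satisfy property (ii), which completes the construction.
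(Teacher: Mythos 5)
Your proof is correct, and it shares the paper's skeleton: both start from the Autonne--Takagi (con-diagonalization) factorization of the complex symmetric $\alp J$ from \cite{HJ1}, and both finish with the same diagonal phase twist ($Q=\Phi Q_0$, $\alp D=\Phi^2 \alp D_0$; the paper writes $\Phi=e^{i\Theta}$). Where you genuinely diverge is in the two technical lemmas. For the multiplicity bound, the paper argues by contradiction on the matrix side: if three $\sigma_k$ coincide, the three-term recurrence coming from $\alp\Sigma\overline{U}=U\alp J$ forces the columns of the corresponding three rows of $U$ into $\operatorname{span}_\C\{v_1,\overline{v_1}\}$, contradicting the rank of three rows of a unitary matrix. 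You instead prove cyclicity of $e_1$ for $A=\alp J\tau$ from the positive subdiagonal, split $\C^n=\C[A^2]e_1+\C[A^2](Ae_1)$, and project onto eigenspaces of the Hermitian $A^2$ -- morally the same mechanism (the three-term recursion plus a two-dimensional collapse), but packaged spectrally rather than as a rank contradiction. Your treatment is also more complete on two points the paper compresses: the paper only asserts that distinctness of the $\lambda_k$ follows ``by a similar argument'' and that the first entries of the coneigenvectors are nonzero, whereas you derive distinctness explicitly (cyclicity of $w=Q_0e_1$ for $\alp D_0\tau$ forces $w_k\neq 0$ and $\arg w_k\not\equiv\arg w_l \pmod{\pi}$ on a repeated singular value, so the squared phases $\phi_k^2\neq\phi_l^2$), and your separate kernel argument ($\operatorname{rank}\alp J\geq n-1$ via the triangular submatrix, $\ker A^2=\ker \alp J^*$ since $A^2=\alp J\alp J^*$ is positive semidefinite) cleanly handles the modulus-zero case, which needs multiplicity one rather than two for distinctness to hold. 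Both routes are sound; yours buys a self-contained proof of the steps the paper sketches, at the cost of setting up the cyclic-vector machinery.
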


\begin{proof}
By \cite[Corollary 4.4.4]{HJ1}, there exists a unitary matrix
$U\in\Cnxn$ and a nonnegative diagonal matrix
$\alp\Sigma = \diag(\sigma_1,\dots,\sigma_n)$
such that
\begin{equation}\label{ujsu}
\alp\Sigma \overline{U} = U\alp J.
\end{equation}
We show that no three of $\sigma_1,\dots,\sigma_n$ can be equal.
To reach a contradiction, we assume, without loss of generality, that
$\sigma_1=\sigma_2=\sigma_3$. Let $V\in\C^{3\times n}$ be the first
three rows of $U$ and let $v_1,\dots,v_n$ be the columns of $V$. By
\eqref{ujsu}, we have the recursion
\begin{align*}
\sigma_1 \overline{v_1} &= \beta_1 v_2 + \alpha_1 v_1, \\
\sigma_1 \overline{v_{j+1}} &= \beta_{j+1} v_{j+2} + \alpha_{j+1} v_{j+1}
+ \beta_j v_j,\qquad j=1,\dots,n-2.
\end{align*}
It follows that each $v_j$ is a linear combination of $v_1$ and
$\overline{v_1}$. This implies that the rank of $V$ is at most two,
contradicting the fact that the rank is three.

Let $\Theta\in\Rnxn$ be a diagonal matrix such that $Q=e^{i\Theta}U$ has
a nonnegative first column. Define
$\alp D = e^{2i\Theta}\alp\Sigma$. Then the identity
\eqref{Jdecomp} holds. That all $\lambda_1,\dots,\lambda_n$
are distinct in \eqref{jdiag} can be shown by an argument similar to the
one given above. Furthermore, if $q_j$ is a column of $Q^*$, then 
$\alp J\overline{q_j} = \lambda_j q_j$ and it is easy to see that the first
entry of $q_j$ must be nonzero.
\end{proof}

\begin{corollary}\label{corfinitesurj}
Let $\alpha_1,\dots,\alpha_n\in\C$ and let $\beta_1,\dots,\beta_{n-1} > 0$.
Then there exists a biradial measure $\mu$ with support of cardinality $n$
such that the complex Jacobi matrix corresponding to $\mu$ is given
by \eqref{jtri}.
\end{corollary}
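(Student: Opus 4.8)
The plan is to build the desired measure directly from the data produced by the preceding proposition. That result furnishes distinct points $\lambda_1,\dots,\lambda_n\in\C$ satisfying property (ii) of Definition \ref{finitemeas}, together with a unitary $Q=(q_{kj})$ having a positive first column and obeying \eqref{Jdecomp}, i.e. $\alp D\overline Q = Q\alp J$ with $\alp J,\alp D$ as in \eqref{jtri}--\eqref{jdiag}. I would read the weights off the first column by setting $\rho_k = q_{k1}^2$. Positivity of the first column gives $\rho_k>0$, and since the first column of a unitary matrix is a unit vector we get $\sum_{k=1}^n\rho_k=\sum_{k=1}^n|q_{k1}|^2=1$. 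Then $\mu=\sum_{k=1}^n\rho_k\delta_{\lambda_k}$ is a positive measure of total mass $1$ on $n$ distinct points.

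Next I would confirm that $\mu$ is genuinely a biradial measure with finite support. Property (ii) says each modulus occurs at most twice, so after a permutation $P$ of the indices the points fall into the pattern required by Definition \ref{finitemeas} (equal-modulus pairs first in increasing order, then the singletons). Such a permutation replaces $Q$ by $PQ$ and $\alp D$ by $P\alp D P^{T}$; since $P$ is real and orthogonal this preserves $\alp D\overline Q=Q\alp J$, leaves $\alp J$ and $\mu$ unchanged, and keeps the first column positive. Hence I may assume the ordering of Definition \ref{finitemeas} at no cost, and the support has cardinality exactly $n$.

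The heart of the matter is to show that the Jacobi matrix attached to $\mu$ by the orthogonalization procedure \eqref{innerp}--\eqref{psrecurs} is precisely $\alp J$. Reading \eqref{Jdecomp} column by column and using that $\alp J$ is tridiagonal yields, for all $k$ and $j$, the relation $\lambda_k\overline{q_{kj}}=\beta_{j-1}q_{k,j-1}+\alpha_j q_{kj}+\beta_j q_{k,j+1}$. Dividing by $\sqrt{\rho_k}$ and writing $f_j(\lambda_k)=q_{kj}/\sqrt{\rho_k}$ turns this into exactly the three-term recurrence \eqref{psrecurs}. By the choice $\rho_k=q_{k1}^2$ one has $f_1\equiv 1$ on the support, so $f_1$ is the normalized constant $p_0$. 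I would then induct on $j$: because $\beta_j>0$, the recurrence presents $f_{j+1}$ as $\beta_j^{-1}(\lambda\tau f_j-\alpha_j f_j-\beta_{j-1}f_{j-1})$, and since $p\mapsto\lambda\overline p$ carries $\pow{\lambda}{j-1}$ to $\pow{\lambda}{j}$ it maps $\mathcal P_{j-1}(r2)$ into $\mathcal P_{j}(r2)$, raising the degree by exactly one; tracking the leading coefficient shows it stays real and positive. Thus each $f_j$ is an honest polynomial in $\mathcal P(r2)$ of degree $j-1$ with positive leading coefficient, and unitarity of $Q$ reads as $\inprod{f_i,f_j}=\delta_{ij}$ for the inner product \eqref{innerp}.

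Finally, the orthonormal polynomials of degrees $0,\dots,n-1$ with positive leading coefficients are uniquely determined by Gram--Schmidt, the inner product being positive definite on $\mathcal P_{n-1}(r2)$ by Proposition \ref{zeroprop}. Hence the $f_j$ coincide with the orthonormal polynomials of $\mu$, and since they satisfy \eqref{psrecurs} with coefficients $\alpha_j,\beta_j$, these are the Jacobi parameters of $\mu$; that is, the complex Jacobi matrix corresponding to $\mu$ is $\alp J$. I expect the main obstacle to be the induction of the third step: one must verify carefully that the recurrence, a priori only an identity among the finitely many values at the $\lambda_k$, actually produces elements of $\mathcal P(r2)$ of the correct degree and with positive leading coefficient, which rests on the degree-raising action of $\lambda\tau$ on the monomials $\pow{\lambda}{k}$ and on the positivity of the $\beta_j$.
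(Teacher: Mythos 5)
Your proof is correct and follows essentially the same route as the paper: both take $Q$ and $\alp D$ from the preceding proposition, set $\rho_k=q_{k1}^2$, and identify the scaled columns $q_{kj}/\sqrt{\rho_k}$ with the orthonormal polynomials of the resulting measure via \eqref{Jdecomp}. The only cosmetic difference is that the paper produces these polynomials by solving the Vandermonde-type interpolation systems \eqref{Qpolyn} (invertible by Proposition \ref{zeroprop}), whereas you generate them by running the recursion \eqref{psrecurs} forward from $p_0\equiv 1$, making explicit the degree-raising and positive-leading-coefficient bookkeeping that the paper leaves implicit.
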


\begin{proof}
Let $Q = (q_{kj})\in\Cnxn$ and $\alp D \in\Cnxn$ be according to the previous
proposition and let $\rho_k = q_{k1}^2$. Define the measure
$\rho = \sum_{k=1}^n \rho_k \delta_{\lambda_k}$. For each $j=1,\dots,n$
we then regard \eqref{Qpolyn} as a system of equations with $p_{j-1}$ as
the unknown.
These systems have invertible Vandermonde-type matrices,
by Proposition \ref{zeroprop}, and are therefore uniquely solvable.
A glance at the equation \eqref{Jdecomp} then shows that the polynomials
$p_{j-1}$ satisfy the recursion \eqref{psrecurs} with the same
coefficients as the given complex Jacobi matrix $\alp J$.
\end{proof}

To sum up, based on constructing an $L^2$ theory
for $\mathcal{C}(r2)$ and then orthogonalizing polynomials \eqref{polap}
according to \eqref{psrecurs}, 
we have  shown that the mapping from biradial 
measures with finite support
to  complex Jacobi matrices 
of type given in Corollary \ref{corfinitesurj}
is surjective. It is not injective, however.
This lack of injectivity will be completely described 
in Section \ref{noninjfinite}, 
after establishing surjectivity in the infinite dimensional case.

\subsection{Biradial measures with infinite support and antilinear 
Jacobi operators on infinite dimensional spaces}\label{infjacobisec}

We now turn to biradial measures with infinite supports in preparation
to the study of antilinear Jacobi operators on infinite dimensional
Hilbert spaces. We denote $\R^+=[0,\infty)$.

\begin{definition}
A Borel probability measure $\rho$ on $\C$ is said to have finite moments,
if it satisfies
\begin{equation}
\int_\C |\lambda|^n\,d\rho(\lambda) < \infty\qquad \text{for all }n=0,1,2,\dots.
\end{equation}
\end{definition}

\begin{definition}\label{genmeas}
Let $\rho_1^+, \rho_2^+$ be positive Borel measures on $\R^+$
such that
\[\rho_1^+(\R^+) + \rho_2^+(\R^+)=1.\]
Let $\phi_1,\phi_2$ be real-valued Borel-measurable functions on $\R^+$
and define the measurable transformation $R_{\phi_j}:\C \to \C$ by
\[R_{\phi_j}(z) = e^{-i \phi_j(|z|)} z.
\]
Define the positive Borel measures $\rho_j$ on $\C$ by
\[\rho_j(E) = \rho_j^+(R_{\phi_j}(E) \cap (\R^+\times \{0\})),
\qquad (j=1,2)\]
where $E\subset\C$ is Borel-measurable. Then $\rho = \rho_1 + \rho_2$
is called a biradial measure. Moreover, it is called symmetric
if $\phi_1(r) = \phi_2(r) + \pi$ (modulo $2\pi$).
\end{definition}


We can represent a biradial measure $\rho$ in an equivalent form as follows.
Let $\rho^+ = \rho_1^+ + \rho_2^+$ and let $a_1,a_2$ be the Radon-Nikodym
derivatives $a_j=d\rho_j^+/d\rho^+ \,(j=1,2)$.
Note that $0\leq a_1(r),a_2(r) \leq 1$ and
$a_1(r) + a_2(r) = 1$ for almost every $r$.
Suppose $E\subset\C$ is measurable and write as follows
(where we use $R_{\phi_j}(E)$ as short-hand for
$R_{\phi_j}(E) \cap (\R^+\times \{0\})$ with the vacuous second
coordinate removed)
\begin{align*}
\rho_j(E) &= \int_0^\infty \rchi_{R_{\phi_j}(E)}\,d\rho_j^+
= \int_0^\infty a_j(r) \delta_r(R_{\phi_j}(E))\,d\rho^+(r) \\
&= \int_0^\infty a_j(r) \delta_{r e^{i\phi_j(r)}}(E)\,d\rho^+(r).
\qquad (j=1,2)
\end{align*}
Hence we can write
\begin{equation}\label{birmdis}
\rho(E) = \int_0^\infty \mu_r(E)\,d\rho^+(r),
\quad\text{where}\quad
\mu_r(E) = \sum_{j=1}^2 a_j(r) \delta_{r e^{i\phi_j(r)}}(E).
\end{equation}
The formula \eqref{birmdis} is the disintegration of the measure $\rho$
with respect to the measurable map $T:\C\to\R^+,\, T(z)=|z|$,
and $\rho^+$ \cite{CHP}.
Conversely, we could define a measure by the formula
\eqref{birmdis} and rewrite
it in the form of Definition \ref{genmeas}.

\smallskip

\begin{example}
Let $\mu$ be a positive Borel measure on $\R$. Define the measures
$\rho_1^+,\rho_2^+$ on $\R^+$ by $\rho_1^+(E)=\mu(E)$ and
$\rho_2^+(E)=\mu(-E \cap (-\infty,0))$, where $E\subset\R^+$.
Let $\phi_1 \equiv 0$ and
$\phi_2 \equiv \pi$. Then the measure $\rho$ on $\C$ in Definition
\ref{genmeas}
is supported inside $\R\times\{0\}$, and $\rho(F\times\{0\}) = \mu(F)$
for all Borel sets $F\subset\R$.
\end{example}

\smallskip

\begin{example} Let $\rho_1^+$ be a Borel probability measure with
support $\R^+$ and absolutely continuous with respect to the
Lebesgue measure.
Let $\{A_k\}_{k=1}^\infty$ be a partition
of $\R^+$ into metrically dense subsets, i.e., for all $k$ we have
$A_k$ Borel measurable and $\rho_1^+(A_k \cap (x-\epsilon,x+\epsilon)) > 0$
for all $x\in\R^+, \epsilon >0$ (e.g. \cite{ERO}). Let
$\{q_k\}_{k=1}^\infty$ be an enumeration of the rationals and
define $\phi_1 = \sum_{k=1}^\infty q_k \rchi_{A_k}$. Let $\rho_2^+=0$.
Then the measure $\rho$ in Definition \ref{genmeas} satisfies
$\rho(B(z,r)) > 0$ for every open disc $B(z,r) \subset \C$ and
therefore the support of $\rho$ is $\C$.
\end{example}

\smallskip

With respect to the 
 monomials $\pow{\lambda}{k}$, measures on $\setC$ 
can be reduced in dimension as follows.

\begin{lemma}\label{birmlemma}
Let $\mu$ be a Borel (probability) measure on $\C$ with finite moments.
Then there exists a symmetric biradial measure $\rho$ such that
\begin{equation}\label{birmequiv}
\int_\C p\,d\mu = \int_\C p\,d\rho \qquad\text{for all } p
\in\mathcal{P}(r2).
\end{equation}
Moreover, if $\mu$ is compactly supported, then also $\rho$ is.
\end{lemma}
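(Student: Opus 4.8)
The plan is to build $\rho$ from the radial distribution of $\mu$ together with the single complex degree of freedom that a symmetric biradial measure carries at each radius. First I would push $\mu$ forward under the modulus map $T:\C\to\R^+$, $T(z)=|z|$, to obtain the Borel probability measure $\rho^+=T_*\mu$ on $\R^+$; it has finite moments since $\int_0^\infty r^n\,d\rho^+(r)=\int_\C|\lambda|^n\,d\mu$. Disintegrating $\mu$ over $T$ with respect to $\rho^+$ (the disintegration theorem, as already used for \eqref{birmdis} via \cite{CHP}) yields a $\rho^+$-measurable family $\{\nu_r\}_{r\ge 0}$ of probability measures, each carried by the circle $|z|=r$, with $\int_\C f\,d\mu=\int_0^\infty\!\big(\int_\C f\,d\nu_r\big)\,d\rho^+(r)$ for every bounded Borel $f$.

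Next I would record the conditional first angular moment $c(r)=\int_\C z\,d\nu_r(z)$, which satisfies $|c(r)|\le r$ because $\nu_r$ is a probability measure on the circle of radius $r$. I then fix the data of Definition \ref{genmeas} by setting $b(r)=|c(r)|/r$ for $r>0$ (and $b(0)=0$), $\phi_1(r)=\arg c(r)$ (chosen arbitrarily where $c(r)=0$), $\phi_2=\phi_1+\pi$, and $a_1=(1+b)/2$, $a_2=(1-b)/2$. Writing $\rho$ through the disintegration formula \eqref{birmdis} with base measure $\rho^+$ then produces a symmetric biradial probability measure, since $0\le a_1,a_2\le 1$ and $a_1+a_2=1$.

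It remains to verify \eqref{birmequiv}. Every $p\in\mathcal{P}(r2)$ is a finite $\setC$-combination of the monomials $\pow{\lambda}{k}$ of Definition \ref{polynot}, that is, of $h(|\lambda|)$ and $h(|\lambda|)\lambda$ with $h(r)=r^{2k}$, so by linearity it suffices to match these two types. For the even type, the relation $a_1+a_2=1$ gives $\int_\C h(|\lambda|)\,d\rho=\int_0^\infty h\,d\rho^+=\int_\C h(|\lambda|)\,d\mu$. For the odd type, the antipodal symmetry $\phi_2=\phi_1+\pi$ collapses the two Diracs to $\int_0^\infty h(r)\,r\,e^{i\phi_1(r)}\big(a_1(r)-a_2(r)\big)\,d\rho^+(r)=\int_0^\infty h(r)\,c(r)\,d\rho^+(r)$, which equals $\int_\C h(|\lambda|)\lambda\,d\mu$ by the definition of $c(r)$ and the disintegration. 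The finite-moment claim follows from $\int_\C|\lambda|^n\,d\rho=\int_0^\infty r^n\,d\rho^+=\int_\C|\lambda|^n\,d\mu$, and if $\mathrm{supp}\,\mu\subset\{|z|\le M\}$ then $\rho^+$ lives on $[0,M]$, whence $\mathrm{supp}\,\rho\subset\{|z|\le M\}$ as well.

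The main obstacle I anticipate is purely measure-theoretic: ensuring that $c(r)$, and hence $\phi_1$, $a_1$, $a_2$, depend measurably on $r$, together with a clean handling of the null issues at $r=0$ and on the set where $c(r)=0$. This is precisely what the measurable dependence of the disintegration $\nu_r$ on $r$ provides. The conceptual heart, by contrast, is the simple observation that a symmetric biradial measure offers exactly one complex parameter $e^{i\phi_1(r)}\big(a_1(r)-a_2(r)\big)$ of modulus at most one per radius, and the bound $|c(r)|\le r$ is exactly what allows this parameter to reproduce the normalized conditional first moment $c(r)/r$ of $\mu$.
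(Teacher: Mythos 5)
Your proposal is correct and follows essentially the same route as the paper's own proof: push $\mu$ forward under $T(z)=|z|$, disintegrate over the radii via \cite{CHP}, and replace each conditional measure $\mu_r$ by an antipodal two-point measure matching its center of mass $C(r)$, which reproduces all monomials $\pow{\lambda}{k}$ and hence all of $\mathcal{P}(r2)$. The only difference is cosmetic: you make the paper's ``we can choose $a_j,\phi_j$'' step explicit with the formulas $a_{1,2}=(1\pm|C(r)|/r)/2$, $\phi_1=\arg C(r)$, $\phi_2=\phi_1+\pi$, and you flag the measurability issue that the paper leaves implicit.
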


\begin{proof}
Define the measurable map $T:\C\to\R^+,\, T(z)=|z|$ and let
$\{\mu_r\}_{r\in\R^+}$ be the disintegration of $\mu$ with respect
to $T$ and the image measure $\rho^+=\mu T^{-1}$  \cite{CHP}. For
nonnegative integer $k$, we then have
\begin{align*}
\int_\C \pow{\lambda}{k} d\mu(\lambda) = \int_0^\infty
\int_{|\lambda|=r} \pow{\lambda}{k}\,d\mu_r(\lambda)\,d\rho^+(r),
\end{align*}
where, for almost all $r$, $\mu_r$ is a Borel probability measure supported
in $\{|\lambda|=r\}$. For odd $k$, write $k=2l+1$, and then we get
\[\int_\C \pow{\lambda}{k} d\mu(\lambda) = \int_0^\infty
\int_{|\lambda|=r} \lambda\,d\mu_r(\lambda)\, r^{2l}\,d\rho^+(r),
\]
and we denote the inner integral (center of mass) on the right-hand side
by $C(r)$. Then we can choose $a_1(r),a_2(r)$ such that
$0\leq a_1(r),a_2(r)\leq 1$ and $a_1(r)+a_2(r)=1$, together with
$\phi_1(r),\phi_2(r)$, such that
\[C(r) = \sum_{j=1}^2 a_j(r) re^{i\phi_j(r)}.\]
If $C(r)\not=0$ is on the circle $\{|\lambda|=r\}$,
we can set $a_2(r)=0$ and then
$\phi_1(r)$ is uniquely determined (modulo $2\pi$). If $C(r)\not=0$ is
inside the circle, these choices are non-unique as illustrated in Figure
\ref{nonuniqfig}, and we let
$\phi_2(r)=\phi_1(r) + \pi$ (modulo $2\pi$) to get symmetry.
If $C(r)=0$, we let $\phi_1(r)=0,\,\phi_2(r)=\pi$.
We now define the measures
\[\nu_r = \sum_{j=1}^2 a_j(r) \delta_{re^{i\phi_j(r)}}.
\]
(Note that if $C(r)\not=0$, our choices for $a_1(r),a_2(r),\phi_1(r)$
and $\phi_2(r)$ make $\nu_r$ the unique measure of this form.)
We observe that
\[\int_{|\lambda|=r} \lambda\,d\mu_r(\lambda)
= C(r)
= \int_{|\lambda|=r} \lambda\,d\nu_r(\lambda).
\]
The measure defined by $\rho(E) = \int_0^\infty \nu_r(E)\,d\rho^+(r)$
is symmetric biradial, see formula \eqref{birmdis}, and we now have
\begin{equation}\label{birmequivm}
\int_\C \pow{\lambda}{k} d\mu(\lambda)
= \int_\C \pow{\lambda}{k} d\rho(\lambda)
\end{equation}
for all odd $k$. It is easy to see that \eqref{birmequivm} is true
for even $k$ as well, and then \eqref{birmequiv} follows.

The final claim follows from the fact that $\rho^+$ is compactly
supported if $\mu$ is.
\end{proof}

\begin{proposition}\label{polydenselemma}
Let $\mu$ be a compactly supported biradial measure. Then
the set of polynomials $\mathcal{P}(r2)$ is dense in $L^2(\mu)$.
\end{proposition}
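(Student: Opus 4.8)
The plan is to prove density by showing that the orthogonal complement of $\mathcal{P}(r2)$ in $L^2(\mu)$ is trivial. Since the monomials $\pow{\lambda}{k}$ all lie in $L^2(\mu)$ (the support is bounded, so $\int|\lambda|^{2k}\,d\mu<\infty$), $\mathcal{P}(r2)$ is a genuine linear subspace, and density is equivalent to the vanishing of its orthogonal complement. So I would take $f\in L^2(\mu)$ with $\inprod{f,p}=\int_\C f\overline{p}\,d\mu=0$ for every $p\in\mathcal{P}(r2)$. As $\mathcal{P}(r2)$ is spanned by the monomials of Definition \ref{polynot}, this is equivalent to the two families of moment conditions
\[\int_\C f(\lambda)\,|\lambda|^{2k}\,d\mu = 0
\quad\text{and}\quad
\int_\C f(\lambda)\,\overline{\lambda}\,|\lambda|^{2k}\,d\mu = 0,
\qquad k=0,1,2,\dots,\]
arising from the even monomials $\pow{\lambda}{2k}=|\lambda|^{2k}$ and the odd monomials $\pow{\lambda}{2k+1}=\lambda|\lambda|^{2k}$ respectively. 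The goal is to deduce that $f=0$ $\mu$-almost everywhere.

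Next I would disintegrate $\mu$ over the radial map $T(z)=|z|$ exactly as in \eqref{birmdis}, writing $\mu=\int_0^\infty \mu_r\,d\rho^+(r)$ with $\mu_r=\sum_{j=1}^2 a_j(r)\delta_{re^{i\phi_j(r)}}$ and $\rho^+$ compactly supported (since $\mu$ is). Abbreviating $f_j(r)=f(re^{i\phi_j(r)})$ and carrying out the inner integrals over each circle, the two moment families turn into
\[\int_0^\infty r^{2k} F_0(r)\,d\rho^+(r) = 0,
\qquad
\int_0^\infty r^{2k} F_1(r)\,d\rho^+(r) = 0,
\qquad k=0,1,2,\dots,\]
where $F_0(r)=a_1(r)f_1(r)+a_2(r)f_2(r)$ and $F_1(r)=r\sum_{j=1}^2 a_j(r)f_j(r)e^{-i\phi_j(r)}$; both belong to $L^1(\rho^+)$ because $f\in L^1(\mu)$ and the support is bounded. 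Since $r\mapsto r^2$ is injective on $[0,\infty)$, the Stone--Weierstrass theorem shows that polynomials in $r^2$ are dense in $C(\operatorname{supp}\rho^+)$, so the complex measures $F_0\,d\rho^+$ and $F_1\,d\rho^+$ annihilate every continuous function and hence vanish: $F_0=F_1=0$ for $\rho^+$-almost every $r$.

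The decisive step is then purely fiberwise, and it is precisely where biradiality enters. For $\rho^+$-almost every $r>0$ with two distinct support points, $F_0(r)=0$ and $F_1(r)/r=0$ give the homogeneous system
\[\begin{cases}
a_1(r)f_1(r)+a_2(r)f_2(r)=0,\\
a_1(r)f_1(r)\,e^{-i\phi_1(r)}+a_2(r)f_2(r)\,e^{-i\phi_2(r)}=0,
\end{cases}\]
whose coefficient determinant $e^{-i\phi_2(r)}-e^{-i\phi_1(r)}$ is nonzero exactly because $\phi_1(r)\not\equiv\phi_2(r)\pmod{2\pi}$; hence $a_1(r)f_1(r)=a_2(r)f_2(r)=0$. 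Wherever a point $re^{i\phi_j(r)}$ actually carries mass we have $a_j(r)>0$, whence $f_j(r)=0$, so $f=0$ $\mu$-almost everywhere. The measure-zero degeneracies I would dispatch separately: at $r=0$ the single relation $F_0(0)=f(0)=0$ suffices, while on a circle with $\phi_1(r)\equiv\phi_2(r)$ the fiber collapses to one point of mass $a_1(r)+a_2(r)=1$, so the first equation alone forces $f=0$ there. This is the analytic incarnation of the ``biradial determinacy'' already recorded in the Example following \eqref{functjoukko}. The main obstacle is not any single hard estimate but the careful bookkeeping along the disintegration — verifying the measurability of $f_j,a_j,\phi_j$ and confirming that the degenerate circles contribute no mass — together with the clean passage from vanishing moments in $r^2$ to vanishing of $F_0,F_1$, for which the injectivity of $r\mapsto r^2$ on $[0,\infty)$ is the crucial point.
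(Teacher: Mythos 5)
Your proof is correct, but it takes a genuinely different route from the paper's. The paper argues by direct approximation: it takes $f\in L^2(\mu)$, replaces it by a compactly supported smooth $g$, uses biradiality to interpolate $g$ at the (at most) two support points of each circle by $u(r^2)+v(r^2)\lambda$ --- smoothness of $g$ making the difference quotient, hence $v$ and then $u$, bounded --- and finally approximates $u(r^2)$ and $v(r^2)r$ by ordinary polynomials in $L^2(\rho^+)$ via Weierstrass. You instead argue by duality: a function orthogonal to all of $\mathcal{P}(r2)$ has vanishing even and odd radial moments; the disintegration \eqref{birmdis} turns these into the vanishing of all moments $\int_0^\infty r^{2k}F_j\,d\rho^+$; Stone--Weierstrass (the algebra of polynomials in $r^2$ separates points of $\operatorname{supp}\rho^+\subset[0,\infty)$) together with Riesz representation kills the complex measures $F_j\,d\rho^+$; and the $2\times 2$ fiberwise system, nonsingular exactly because the two mass points on a circle are distinct, forces $f=0$ $\mu$-almost everywhere. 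Both proofs hinge on the same structural fact --- at most two mass points per circle, so the two radial moment families determine everything fiberwise --- but yours trades the paper's Lipschitz/boundedness bookkeeping for measure-theoretic bookkeeping (Borel representatives of $f$, measurability of $f_j$, $a_j$, $\phi_j$ along the disintegration, and the null-set management you mention), and it is closer in spirit to the moment arguments the paper uses elsewhere (Proposition \ref{propsamemoments}, Theorem \ref{boundedmu}). What the paper's approach buys is constructiveness: it exhibits the approximating polynomials explicitly; what yours buys is that no smoothing step and no boundedness of $u$ and $v$ are ever needed.
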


\begin{proof}
Let $R>0$ be such that the support of $\mu$ is contained in an origin-centred
closed disc of radius $R$. We denote the disintegration of $\mu$
as in formula \eqref{birmdis}, and denote $\lambda_1(r) = 
re^{i\phi_1(r)},\,\lambda_2(r)=re^{i\phi_2(r)}$.

Take $f\in L^2(\mu)$ and $\epsilon>0$.
Let $g$ be a compactly supported smooth function on
$\C$ such that $\norm{f-g}_{L^2(\mu)}^2 < \epsilon/2$.
For $r$ such that $\lambda_1(r)=\lambda_2(r)$, let
$u(r^2)=g(\lambda_1(r))$ and $v(r^2)=0$, and otherwise
let $u(r^2)$ and $v(r^2)$ be the unique numbers such that
\[g(\lambda_j(r)) = u(r^2) + v(r^2)\lambda_j(r),\qquad(j=1,2).\]
For $r$ such that $\lambda_1(r)\not=\lambda_2(r)$, we now have
\[\frac{g(\lambda_1(r)) - g(\lambda_2(r))}{\lambda_1(r)-\lambda_2(r)}
= v(r^2),\]
where the left hand side is bounded. Hence $v$ is a bounded function
and it follows that $u$ is bounded as well.
Let $p$ and $q$ be ordinary polynomials such that
\[\int_0^R |u(r^2)-p(r^2)|^2\,d\rho^+(r) < \frac{\epsilon}{8}
\quad\text{and}\quad
\int_0^R |v(r^2)r-q(r^2)r|^2\,d\rho^+(r) < \frac{\epsilon}{8}.\]
Then we have
\[\int_{\C} \left|f(\lambda) - p(|\lambda|^2) - q(|\lambda|^2)\lambda
\right|^2\,d\mu(\lambda) < \epsilon.\]
\end{proof}


With these measure theoretic preparations, let $\rho$ be a Borel (probability) measure on $\C$ with finite moments.
On $\mathcal{P}(r2)$, we define the inner product
\begin{equation}\label{innerpa}
\inprod{p,q} = \int_{\C} p\overline{q}\,d\rho.
\end{equation}
We then apply the Gram-Schmidt process to
the monomials $1,$ $\pow{\lambda}{1},$ $\pow{\lambda}{2},$ $\dots$
with respect to
the inner product \eqref{innerp} and obtain an orthonormal sequence
of polynomials $p_0,p_1,p_2,\dots$, where $p_j$ has degree $j$.
The process breaks down if and only if
$\dim \mathcal{P}_n(r2) \leq n$ for some $n$.
In the breakdown case, for the least such $n$, we have the orthonormal
polynomials $p_0,\dots,p_{n-1}$ and the corresponding Jacobi parameters
$\{\alpha_j\}_{j=1}^n$, $\{\beta_j\}_{j=1}^{n-1}$ similar to the
case of biradial measures with finite support.
If the process
does not break down, we get infinitely many Jacobi parameters
$\{\alpha_j\}_{j=1}^\infty$ with positive $\{\beta_j\}_{j=1}^\infty$ and the
recursion \eqref{psrecurs} holds for all $j=0,1,2,\dots$.
These Jacobi parameters are recorded in the infinite matrix
\begin{equation}\label{infjacobi}
\alp{J} = \begin{bmatrix}
\alpha_1&\beta_1&0&\cdots \\
\beta_1&\alpha_2&\beta_2 & \ddots \\
0&\beta_2&\alpha_3&\ddots  \\
\vdots&\ddots&\ddots&\ddots  \\
\end{bmatrix}.
\end{equation}

\begin{proposition}\label{propsamemoments}
Let $\rho,\rho'$ be Borel (probability) measures with finite moments
and at least $n$ Jacobi parameters,
$\{\alpha_j\}_{j=1}^n$, $\{\beta_j\}_{j=1}^{n-1}$
and
$\{\alpha'_j\}_{j=1}^n$, $\{\beta'_j\}_{j=1}^{n-1}$,
respectively. Then $\alpha_j=\alpha_j'\quad(j=1,\dots,n)$ and
$\beta_j=\beta_j'\quad(j=1,\dots,n-1)$ if and only if
\begin{equation}\label{samemoments}
\int_\C \pow{\lambda}{k}\,d\rho =
\int_\C \pow{\lambda}{k}\,d\rho' \qquad \text{for all } k=0,1,\dots,2n-1.
\end{equation}
\end{proposition}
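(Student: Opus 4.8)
The plan is to show that, under the standing assumption that both measures admit at least $n$ Jacobi parameters, the moments $M_k := \int_\C \pow{\lambda}{k}\,d\rho$ for $k=0,\dots,2n-1$ and the parameters $\alpha_1,\dots,\alpha_n,\beta_1,\dots,\beta_{n-1}$ determine one another; the asserted equivalence is then immediate. The link between them is the Gram matrix $G_{ab}:=\inprod{\pow{\lambda}{a},\pow{\lambda}{b}}$ of the monomials. First I would record two parity identities: $\pow{\lambda}{a}\overline{\pow{\lambda}{b}}$ equals $\pow{\lambda}{a+b}$, except that it equals $\overline{\pow{\lambda}{a+b}}$ exactly when $a$ is even and $b$ is odd; and $\lambda\,\overline{\pow{\lambda}{b}}=\pow{\lambda}{b+1}$. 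The first gives $G_{ab}=M_{a+b}$ or $\overline{M_{a+b}}$, so that knowing $M_0,\dots,M_N$ is equivalent to knowing all $G_{ab}$ with $a+b\le N$; the second controls degrees under the action of $\lambda\tau$. Since $\rho$ is a probability measure, $M_0=1$ and $p_0=1$.

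For the implication from moments to parameters, assume $M_k=M_k'$ for $k\le 2n-1$, so all $G_{ab}$ with $a+b\le 2n-1$ agree. The Gram--Schmidt construction of $p_0,\dots,p_{n-1}$ uses only inner products of total degree at most $2n-2$ and, once normalised so that each leading coefficient is positive (which makes these polynomials unique), yields identical $p_0,\dots,p_{n-1}$ for the two measures. By the recurrence \eqref{psrecurs}, the coefficients $\alpha_1,\dots,\alpha_{n-1}$ and $\beta_1,\dots,\beta_{n-1}$ are inner products of total degree at most $2n-2$ and hence agree. The last coefficient $\alpha_n=\inprod{\lambda\overline{p_{n-1}},p_{n-1}}$ is, after using $\lambda\,\overline{\pow{\lambda}{b}}=\pow{\lambda}{b+1}$, a combination of $G_{ab}$ with $a+b\le 2n-1$, so it agrees too.

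For the converse, assume the parameters coincide. Starting from $p_0=1$ and running \eqref{psrecurs}, the same formal polynomials $p_0,\dots,p_{n-1}$ (expressed in the $\pow{\lambda}{j}$) are produced for both measures. Inverting this triangular change of basis writes each $\pow{\lambda}{l}$ with $l\le n-1$ as a known combination of $p_0,\dots,p_l$; since $M_{2l}=\norm{\pow{\lambda}{l}}^2$ and $M_{2l+1}=\inprod{\pow{\lambda}{l+1},\pow{\lambda}{l}}$ up to conjugation, orthonormality of the $p_i$ then pins down $M_0,\dots,M_{2n-2}$ as explicit functions of the parameters. It remains to recover $M_{2n-1}$: expanding $\alpha_n=\inprod{\lambda\overline{p_{n-1}},p_{n-1}}$ and isolating the single top-degree contribution gives $\alpha_n=d_{n-1}^{\,2}\,\inprod{\pow{\lambda}{n},\pow{\lambda}{n-1}}+(\text{inner products } G_{ab}\text{ with }a+b\le 2n-2)$, where $d_{n-1}>0$ is the leading coefficient of $p_{n-1}$; solving for $\inprod{\pow{\lambda}{n},\pow{\lambda}{n-1}}=M_{2n-1}$ up to conjugation finishes the argument. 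I expect the crux to be precisely this boundary bookkeeping---checking that $\alpha_n$ is the only parameter that sees the top moment $M_{2n-1}$, that it does so through one term with the nonzero coefficient $d_{n-1}^{\,2}$, and that the triangular correspondence $\alpha_j\leftrightarrow M_{2j-1}$, $\beta_j\leftrightarrow M_{2j}$ therefore closes up exactly at $k=2n-1$.
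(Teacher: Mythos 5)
Your proof is correct and takes essentially the same route as the paper: the paper's own proof is a one-line appeal to the real Jacobi parameter case (Simon's Proposition 1.3.4) with the assertion that it ``carries over easily,'' and your argument is exactly that carried-over proof, with the parity identities $\pow{\lambda}{a}\overline{\pow{\lambda}{b}} = \pow{\lambda}{a+b}$ (up to conjugation) and $\lambda\overline{\pow{\lambda}{b}}=\pow{\lambda}{b+1}$ supplying the bookkeeping specific to $\mathcal{P}(r2)$ that the citation leaves implicit. Your boundary analysis --- that $\alpha_n$ sees the top moment $M_{2n-1}$ only through the single term $d_{n-1}^{2}\inprod{\pow{\lambda}{n},\pow{\lambda}{n-1}}$ with $d_{n-1}>0$ --- is the correct closing step.
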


\begin{proof}
The proof for the real Jacobi parameter case carries over easily
\cite[Proposition 1.3.4]{SIBOOK}.
\end{proof}

The following is a version of Favard's theorem for bounded antilinear
Jacobi operators. We prove it similarly as in \cite{SIBOOK}.

\begin{theorem}\label{boundedmu}
Let $\{\alpha_j\}_{j=1}^\infty$, $\{\beta_j\}_{j=1}^\infty$
be bounded Jacobi parameters. Then there exists a compactly supported
symmetric biradial measure $\mu$ on $\C$ such that the Jacobi parameters
corresponding to $\mu$ are
$\{\alpha_j\}_{j=1}^\infty$, $\{\beta_j\}_{j=1}^\infty$.
\end{theorem}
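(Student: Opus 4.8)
The plan is to realize the prescribed parameters as a concrete antilinear operator and then manufacture its spectral measure on $\C$ in biradial form. First I would form the complex symmetric tridiagonal matrix $\alp J$ of \eqref{infjacobi} and view $A=\alp J\tau$ as an operator on $\ell^2(\setN)$. Since $\{\alpha_j\},\{\beta_j\}$ are bounded, $\alp J$ is a bounded tridiagonal matrix, so $A$ is a bounded antilinear operator; because $\alp J$ is complex symmetric, $A$ is self-adjoint with respect to $(\cdot,\cdot)_{\setR}$. The standard basis vector $e_1$ is cyclic for $A$: as $\beta_j>0$, each $A^k e_1$ has a nonzero $e_{k+1}$-component, so $\overline{\operatorname{span}}\{A^k e_1\}=\ell^2(\setN)$. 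Finally $A^2=\alp J\,\overline{\alp J}$ is bounded, $\C$-linear, self-adjoint and positive semidefinite, with $\sigma(A^2)\subset[0,\|A\|^2]$.

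Next I would build the measure from the spectral data of $A^2$. Let $E(\cdot)$ be the spectral measure of $A^2$ and set $\rho^+(S)=(E(S)e_1,e_1)$, a probability measure carried by $[0,\|A\|^2]$; this will be the radial part, in the variable $t=|\lambda|^2$. The phase information is encoded in the complex measure $C(S)=(AE(S)e_1,e_1)$. A short computation shows $A f(A^2)=\overline f(A^2)A$, so for real $f$ — in particular for $f=\rchi_S$ — the operator $A$ commutes with $E(S)$; hence $AE(S)e_1\in\operatorname{ran}E(S)$ and $C(S)=(AE(S)e_1,E(S)e_1)$. A Cauchy--Schwarz estimate gives $|C(S)|\le\big(\int_S t\,d\rho^+\big)^{1/2}\rho^+(S)^{1/2}$, and Lebesgue differentiation then yields $C\ll\rho^+$ with Radon--Nikodym derivative $c$ obeying $|c(t)|\le\sqrt t$ almost everywhere. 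Because $c(t)$ lies in the closed disc of radius $\sqrt t$, I can run the center-of-mass construction from the proof of Lemma \ref{birmlemma}: at $\rho^+$-almost every $t$ choose weights $a_1(t)+a_2(t)=1$ and symmetric phases $\phi_2=\phi_1+\pi$ with $a_1(t)\sqrt t\,e^{i\phi_1}+a_2(t)\sqrt t\,e^{i\phi_2}=c(t)$, and assemble these atoms over $r=\sqrt t$ through the disintegration \eqref{birmdis}. This produces a compactly supported symmetric biradial measure $\mu$ in the sense of Definition \ref{genmeas}.

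I would then verify that $\mu$ reproduces the moments of $A$, i.e. $\int_\C\pow{\lambda}{k}\,d\mu=(A^k e_1,e_1)$ for all $k$. For even $k=2l$ this is $\int t^l\,d\rho^+=((A^2)^l e_1,e_1)$, while for odd $k=2l+1$ the inner integral against $\mu$ at radius $r$ is exactly the center of mass $c(t)$, so the integral becomes $\int t^l\,dC(t)=(A(A^2)^l e_1,e_1)$; both equal $(A^k e_1,e_1)$ by Definition \ref{polynot}. Consequently every $L^2(\mu)$ inner product $\inprod{\pow{\lambda}{i},\pow{\lambda}{j}}=\int\pow{\lambda}{i}\overline{\pow{\lambda}{j}}\,d\mu$ reduces, after simplifying $\pow{\lambda}{i}\overline{\pow{\lambda}{j}}$ to a single monomial $\pow{\lambda}{k}$ or its conjugate, to a moment, and therefore agrees with the $\ell^2$ inner product $(A^i e_1,A^j e_1)$. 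Thus the Gram matrix of the monomials $\pow{\lambda}{k}$ in $L^2(\mu)$ coincides with that of the vectors $A^k e_1$ in $\ell^2$. Since the $A^k e_1$ are linearly independent (again because $\beta_j>0$), this Gram matrix is positive definite, so the Gram--Schmidt process for $\mu$ does not break down; and because Gram--Schmidt of $\{A^k e_1\}$ manifestly returns the coefficients $\alpha_j,\beta_j$ in the recurrence \eqref{psrecurs}, the identical Gram matrix forces the Jacobi parameters of $\mu$ to be precisely $\{\alpha_j\},\{\beta_j\}$ (cf. Proposition \ref{propsamemoments}).

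The main obstacle I expect lies in the second step, the passage from the $\C$-linear spectral theory of $A^2$ to the full antilinear $A$: one must establish that $A$ commutes with the spectral projections of $A^2$ and that the phase measure $C=(AE(\cdot)e_1,e_1)$ is absolutely continuous with density bounded by $\sqrt t$, so that the center-of-mass representation on each circle $\{|\lambda|=\sqrt t\}$ is available. Once this is in place, everything else parallels the classical bounded Favard theorem, with Lemma \ref{birmlemma} supplying the biradial bookkeeping and the compact support of $\mu$ following from that of $\rho^+$.
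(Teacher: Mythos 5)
Your proposal is correct, but it takes a genuinely different route from the paper's. The paper argues by truncation and compactness: for each $n$ it invokes the finite-dimensional surjectivity result (Corollary \ref{corfinitesurj}, resting on the con-eigenvalue decomposition of \cite[Corollary 4.4.4]{HJ1}) to get a finitely supported biradial measure $\mu_n$ with Jacobi parameters $\{\alpha_j\}_{j=1}^n$, $\{\beta_j\}_{j=1}^{n-1}$, notes that all $\mu_n$ are supported in the fixed disc $\overline{B}(0,\norm{\alp J\tau})$, extracts a weak-$*$ convergent subsequence, identifies the Jacobi parameters of the limit by moment matching (Proposition \ref{propsamemoments}), and finally applies Lemma \ref{birmlemma} to replace the limit by a symmetric biradial measure. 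You instead construct the measure intrinsically from the operator $A=\alp J\tau$ on $\ell^2(\setN)$: the radial part is the spectral measure of the $\C$-linear operator $A^2$ at the cyclic vector $e_1$, the angular part is the Radon--Nikodym derivative of the complex measure $C(S)=(AE(S)e_1,e_1)$, and the biradial measure is assembled by the center-of-mass device borrowed from the proof of Lemma \ref{birmlemma}. Your technical claims do hold: $Af(A^2)=\overline{f}(A^2)A$ extends from polynomials to bounded Borel $f$ by strong limits, giving commutation with spectral projections; your Cauchy--Schwarz estimate together with differentiation of $C$ against $\rho^+$ on $\R$ gives $|c(t)|\le\sqrt{t}$ almost everywhere; and the Gram-matrix identification works because $\pow{\lambda}{i}\overline{\pow{\lambda}{j}}=\tau^j\pow{\lambda}{i+j}$ while $(A^ie_1,A^je_1)=\tau^j(A^{i+j}e_1,e_1)$, so both sets of Jacobi parameters are the same functions of the same moment data. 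The one step you should make explicit is the intertwining $\lambda\overline{\pow{\lambda}{k}}=\pow{\lambda}{k+1}\leftrightarrow A\cdot A^ke_1$, which is what ensures that the recurrence \eqref{psrecurs} in $L^2(\mu)$ and the three-term action of $\alp J\tau$ on the standard basis carry identical coefficients. As for what each approach buys: the paper's proof is soft and short given its infrastructure (finite linear algebra plus weak-$*$ compactness) but produces the measure only as a subsequential limit; yours needs the Borel functional calculus and measure differentiation, but yields a canonical, explicitly described measure and in effect establishes the spectral representation of Theorem \ref{spectralthm} directly --- inverting the paper's logical order, in which Theorem \ref{boundedmu} is the input to Theorem \ref{spectralthm}. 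There is no circularity in doing so, since you use only the $\C$-linear spectral theorem and results upstream of the statement.
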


\begin{proof}
By Corollary \ref{corfinitesurj}, for each $n$ there exists
a biradial measure $\mu_n$ such that the corresponding Jacobi
parameters are $\{\alpha_j\}_{j=1}^n$, $\{\beta_j\}_{j=1}^{n-1}$,
and we denote the corresponding complex Jacobi matrix by $\alp{J^{[n]}}$.
Since the $n$ points supporting $\mu_n$ are coneigenvalues of
$\alp{J^{[n]}}$, the support of $\mu_n$ is contained in the closed
disc $\overline{B}(0,\norm{\alp{J^{[n]}}}) \subset
\overline{B}(0,\norm{\alp J\tau}) =: B$. Moreover, $\mu_n(\C) = 1$ is bounded
for all $n$ and
therefore there exists a weakly converging subsequence
$\{\mu_{n_j}\}_{j=1}^\infty$ such that 
\[
\lim_{j\to\infty} \int_{B} f\,d\mu_{n_j} =
\int_{B} f\,d\mu
\]
for all continuous functions $f$ on $B$, where $\mu$ is a Borel
probability measure supported in $B$. For each nonnegative integer $k$,
we choose
$f(\lambda) = \pow{\lambda}{k}$ and use Proposition \ref{propsamemoments}
to conclude that the Jacobi parameters of $\mu$ are
$\{\alpha_j\}_{j=1}^\infty$, $\{\beta_j\}_{j=1}^\infty$. Moreover,
an application of Lemma \ref{birmlemma} replaces $\mu$ with a
symmetric biradial measure.
\end{proof}

We next obtain a spectral theorem for bounded antilinear self-adjoint
operators. For an approach using spectral integrals, see \cite{Santtu}.

\begin{theorem}\label{spectralthm}
Let $A$ be a bounded antilinear self-adjoint operator on a Hilbert
space $H$. Suppose there exists a cyclic vector of $A$. Then there
exists a compactly supported symmetric biradial measure $\mu$ on $\C$,
and a $\C$-linear isometric isomorphism $U:H\to L^2(\mu)$, such that
\begin{equation}\label{antimulti}
UAU^{-1} = \lambda \tau,
\end{equation}
where $\lambda\tau$ denotes the multiplication operator
$f(\lambda) \mapsto \lambda\overline{f(\lambda)}$ on $L^2(\mu)$.
\end{theorem}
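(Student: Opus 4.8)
The plan is to build the isometry $U$ directly from the Jacobi/orthogonal-polynomial structure already developed, reducing the theorem to the correspondence between a cyclic self-adjoint antilinear operator and its associated biradial measure. Since $A$ has a cyclic vector $x$, we have $K(A;x)=H$, so $A$ is represented on an orthonormal basis as an antilinear Jacobi operator $\alp{J}\tau$, where $\alp{J}$ is the complex symmetric tridiagonal matrix of \eqref{infjacobi}; its Jacobi parameters $\{\alpha_j\},\{\beta_j\}$ are bounded because $A$ is bounded. The key is then to pass from these parameters to a measure.

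First I would invoke Theorem \ref{boundedmu} (the Favard theorem for bounded antilinear Jacobi operators) to produce a compactly supported symmetric biradial measure $\mu$ whose Jacobi parameters are exactly $\{\alpha_j\},\{\beta_j\}$. By Proposition \ref{polydenselemma}, the polynomials $\mathcal{P}(r2)$ are dense in $L^2(\mu)$, and the Gram--Schmidt process applied to the monomials $\pow{\lambda}{k}$ yields an orthonormal sequence $\{p_j\}$ obeying the three-term recurrence \eqref{psrecurs} with these same parameters. I would then define $U:H\to L^2(\mu)$ on the orthonormal basis $\{e_j\}$ of $H$ (the image of $x$ under the orthogonalization of $\{A^kx\}$) by $Ue_j = p_j$, extended by $\C$-linearity. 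Because both $\{e_j\}$ and $\{p_j\}$ are orthonormal bases of their respective spaces (density from Proposition \ref{polydenselemma} gives surjectivity), $U$ is a $\C$-linear isometric isomorphism.

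The heart of the argument is verifying the intertwining \eqref{antimulti}. On the $H$ side, $A e_j = \alp{J}\tau e_j$, which by the tridiagonal form of $\alp{J}$ reads $Ae_j = \beta_{j+1}e_{j+1} + \alpha_{j+1}e_j + \beta_j e_{j-1}$ after conjugation of the (real) basis coordinates. On the $L^2(\mu)$ side, the multiplication operator acts as $\lambda\tau\, p_j = \lambda\overline{p_j}$, and the recurrence \eqref{psrecurs} rearranges to exactly $\lambda\overline{p_j} = \beta_{j+1}p_{j+1} + \alpha_{j+1}p_j + \beta_j p_{j-1}$. Matching these two expressions basis-vector by basis-vector gives $UA = (\lambda\tau)U$, i.e.\ $UAU^{-1}=\lambda\tau$, provided one keeps careful track of the antilinearity: the conjugation $\tau$ appears consistently on both sides precisely because the recurrence coefficients $\beta_j$ are real and the $p_j$ have real-coefficient leading terms.

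The main obstacle I anticipate is the bookkeeping of conjugations in matching the antilinear action, together with confirming that $U$ is genuinely well-defined and isometric across the possible breakdown of the Gram--Schmidt process (the finite-dimensional case, where $\beta_n=0$ and $H$ is $n$-dimensional). In the finite case one replaces $l^2(\setN)$ by $\setC^n$ and uses Corollary \ref{corfinitesurj} in place of Theorem \ref{boundedmu}; the same intertwining computation goes through with the recurrence terminating. A secondary subtlety is that $U$ must be $\C$-linear while conjugating the multiplication operator into $\lambda\tau$, so I would check that the images $p_j$ inherit orthonormality in $L^2(\mu)$ with respect to the Hermitian inner product \eqref{innerpa}, which is exactly what the Gram--Schmidt construction guarantees, so that isometry reduces to a coordinate identity and requires no further estimate.
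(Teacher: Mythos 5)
Your proposal is correct and follows essentially the same route as the paper's own proof: generate the orthonormal basis $\{q_j\}$ and Jacobi parameters from the cyclic vector, invoke Theorem \ref{boundedmu} for the measure $\mu$, use Proposition \ref{polydenselemma} to conclude the orthonormal polynomials $\{p_j\}$ form a basis of $L^2(\mu)$, and define $U$ by matching bases so that the three-term recurrence \eqref{psrecurs} gives the intertwining. Your explicit verification of the conjugation bookkeeping and your remark on the finite-dimensional (breakdown) case merely spell out details the paper compresses into ``and the claim follows.''
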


\begin{proof}
Let $x\in H$ be the cyclic vector and let $\{q_j\}_{j=1}^\infty$ be
the orthonormal basis of $H$ generated by the Arnoldi process starting
from the vector $x$. Moreover, let $\alp{J}$ be the generated complex (infinite)
Jacobi matrix with parameters
$\{\alpha_j\}_{j=1}^\infty$, $\{\beta_j\}_{j=1}^\infty$.
By Theorem \ref{boundedmu} the measure $\mu$ exists, with
orthonormal polynomials $\{p_j\}_{j=0}^\infty$, such that
\[ \beta_j p_{j-1}(\lambda) + \alpha_{j+1}p_j(\lambda) +
 \beta_{j+1} p_{j+1}(\lambda) = \lambda\overline{p_j(\lambda)}
\qquad(j=0,1,2,\dots),
\]
where $p_{-1}\equiv 0$. The set of polynomials $\{p_j\}_{j=0}^\infty$ is
an orthonormal basis of $L^2(\mu)$ by Proposition \ref{polydenselemma}.
We define the isometric isomorphism  $U$ by $U(q_j) = p_{j-1}\,(j=1,2,\dots)$
and the claim follows.
\end{proof}

\begin{corollary}\label{spectralthmf}
Under the assumptions of Theorem \ref{spectralthm}, for any
continuous biradial function $\hat{f}$ defined on $\sigma(A)$,
\[Uf(A)U^{-1} = u(|\lambda|^2) + v(|\lambda|^2)\lambda \tau,\]
where $u$ and $v$ are as in \eqref{biradf}.
\end{corollary}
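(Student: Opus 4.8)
The plan is to build on Theorem~\ref{spectralthm}, which already furnishes the isometric isomorphism $U:H\to L^2(\mu)$ with $UAU^{-1}=\lambda\tau$. The idea is that once multiplication by $A$ has been transported to multiplication by $\lambda\tau$ on $L^2(\mu)$, every functional-calculus expression built from $A$ and $A^2$ should transport to the corresponding multiplication operator, because conjugation by $U$ is a real-linear (indeed $\C$-linear) algebra homomorphism on the operators it intertwines. First I would record that $U$ is $\C$-linear and isometric, hence $U B U^{-1}$ is well defined for any $B\in\mathcal{B}(H)$ and conjugation respects sums, real scalar multiples, and, crucially, composition: $U(B_1B_2)U^{-1}=(UB_1U^{-1})(UB_2U^{-1})$. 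Applying this to $B=A$ gives $UAU^{-1}=\lambda\tau$, and applying it to $B=A^2$ gives $UA^2U^{-1}=(\lambda\tau)^2$, which acts as multiplication by $|\lambda|^2$ on $L^2(\mu)$ since $(\lambda\tau)^2 f=\lambda\overline{\lambda\overline{f}}=|\lambda|^2 f$.

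Next I would treat the polynomial case first and then pass to the continuous limit. For a polynomial $p$, the operator $p(A)=u(A^2)+v(A^2)A$ with $u,v$ polynomials transports, using $\C$-linearity and multiplicativity of conjugation together with the continuous functional calculus for the $\C$-linear positive operator $A^2$, to $u(|\lambda|^2)+v(|\lambda|^2)\lambda\tau$ as a multiplication operator on $L^2(\mu)$; here one uses that $U A^2 U^{-1}$ is multiplication by $|\lambda|^2$, so $U\,u(A^2)\,U^{-1}$ is multiplication by $u(|\lambda|^2)$, and similarly $U\,v(A^2)A\,U^{-1}=v(|\lambda|^2)\cdot\lambda\tau$. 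For a general continuous biradial $\hat f$, I would invoke Lemma~\ref{sapolyapprox} to choose polynomials $p$ with $\norm{f(A)-p(A)}<\epsilon$ and $\max_{\sigma(A)}|\hat f-\hat p|<\epsilon$. Since $U$ is isometric, the first estimate transports to $\norm{U f(A)U^{-1}-U p(A)U^{-1}}<\epsilon$, while the second controls the sup-norm (and hence the $L^2(\mu)$ operator norm, as $\mu$ is supported in $\sigma(A)$ by Theorem~\ref{spectmapthm2}) of the difference between the multiplication operators by $\hat p$ and by $\hat f$. Letting $\epsilon\to 0$ identifies $U f(A)U^{-1}$ with multiplication by $u(|\lambda|^2)+v(|\lambda|^2)\lambda\tau$.

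The step I expect to be the main obstacle is making the limiting argument clean on the measure-theoretic side: one must verify that convergence of the multiplication operators $\hat p_n\mapsto u(|\lambda|^2)+v(|\lambda|^2)\lambda\tau$ in the operator norm of $L^2(\mu)$ is genuinely controlled by uniform convergence on $\sigma(A)$, which requires knowing $\operatorname{supp}\mu\subset\sigma(A)$ and that the operator norm of a biradial multiplication operator is dominated by its sup-norm over the support. The former follows from Theorem~\ref{spectmapthm2} (equivalently from the fact that the support consists of coneigenvalues, as used in the proof of Theorem~\ref{boundedmu}), and the latter is immediate once one observes that the multiplication operator $g\mapsto(u+v\lambda\tau)g$ has norm at most $\sup|\hat f|$ on the support. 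With these two facts in hand the passage to the limit is routine and the corollary follows.
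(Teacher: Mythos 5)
Your overall strategy coincides with the paper's: the paper's entire proof of this corollary is the single line ``Apply Lemma \ref{sapolyapprox}'', and your expansion of it --- transport polynomials via $\C$-linearity and multiplicativity of conjugation by $U$ (so that $Up(A)U^{-1}$ is multiplication by $\hat p$), then pass to the limit using the two estimates of Lemma \ref{sapolyapprox} and the isometry of $U$ --- is exactly the argument the paper intends, worked out in more detail. You also correctly identify that the limiting step needs $\operatorname{supp}\mu\subset\sigma(A)$; indeed this is needed even for the right-hand side of the corollary to be a well-defined operator on $L^2(\mu)$, since $u$ and $v$ are only defined on $\sigma(A^2)$.

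The genuine flaw is in how you justify that support fact. You cite Theorem \ref{spectmapthm2}, but within the paper's logical structure this is circular: Theorem \ref{spectmapthm2} is proved only at the end of Section \ref{infjacobisec}, and its proof begins by invoking precisely Corollary \ref{spectralthmf} (see \eqref{fmultop}). Your fallback --- ``the support consists of coneigenvalues, as used in the proof of Theorem \ref{boundedmu}'' --- does not suffice either: that argument only places the supports of the approximating measures $\mu_n$, hence of the weak limit $\mu$, inside the closed disc $\overline{B}(0,\norm{\alp{J}\tau})$, which can be much larger than $\sigma(A)$. The non-circular reference is Proposition \ref{spectrumsupprop}: although it is printed after the corollary, its proof uses only $UA^2U^{-1}=|\lambda|^2$ from Theorem \ref{spectralthm} and the continuous functional calculus for the $\C$-linear operator $A^2$. (Equivalently, argue directly: $\sigma(A^2)$ is the spectrum of multiplication by $|\lambda|^2$ on $L^2(\mu)$, i.e.\ the essential range of $|\lambda|^2$, so every $\lambda_0\in\operatorname{supp}\mu$ satisfies $|\lambda_0|^2\in\sigma(A^2)$, and Proposition \ref{HSprop} then gives $\lambda_0\in\sigma(A)$.) One further small inaccuracy: because of the conjugation $\tau$, the pointwise estimate for $\bigl((u+v\lambda\tau)g\bigr)(\lambda)$ involves $|\hat f|$ evaluated at the rotated point $\lambda e^{-2i\arg g(\lambda)}$, not at $\lambda$ itself; so the bound of the multiplication operator's norm by $\max_{\sigma(A)}|\hat f|$ is not literally ``immediate'' but uses the circular symmetry of $\sigma(A)$, or, more simply, one bounds $|u-u_n|$ and $|(v-v_n)\lambda|$ separately, which the proof of Lemma \ref{sapolyapprox} provides. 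With these repairs your proof is correct and is essentially the paper's.
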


\begin{proof}
Apply Lemma \ref{sapolyapprox}.
\end{proof}

\begin{proposition}\label{spectrumsupprop}
Biradial measures $\mu$ given by Theorem \ref{spectralthm}
are supported in the spectrum $\sigma(A)$.
\end{proposition}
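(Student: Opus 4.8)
The plan is to transport the claim to the model space $L^2(\mu)$ via the isomorphism $U$ furnished by Theorem \ref{spectralthm}, reducing everything to a statement about the multiplication–conjugation operator $M := \lambda\tau$ on $L^2(\mu)$. Since $U$ is a $\C$-linear isometric isomorphism with $UAU^{-1} = M$, conjugation by $U$ fixes the complex scalar operator $\lambda I$ and preserves bounded invertibility of $\lambda I - (\cdot)$, so $\sigma(A) = \sigma(M)$. It therefore suffices to prove $\operatorname{supp}\mu \subset \sigma(M)$.

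The key observation is that $M^2$ is complex linear and is merely multiplication by $|\lambda|^2$: indeed $M^2 f(\lambda) = \lambda\overline{\lambda\overline{f(\lambda)}} = |\lambda|^2 f(\lambda)$. The spectrum of a multiplication operator $M_g$ on $L^2(\mu)$ is the essential range of $g$ with respect to $\mu$. Fixing $\lambda_0 \in \operatorname{supp}\mu$ and setting $c = |\lambda_0|^2$, I would argue as follows: for any $\delta > 0$, continuity of $g(\lambda) = |\lambda|^2$ gives $\epsilon > 0$ with $g(B(\lambda_0,\epsilon)) \subset B(c,\delta)$, and since $\lambda_0 \in \operatorname{supp}\mu$ we have $\mu(B(\lambda_0,\epsilon)) > 0$, whence $\mu(g^{-1}(B(c,\delta))) > 0$. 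As $\delta$ was arbitrary, $c = |\lambda_0|^2$ lies in the essential range of $g$, that is, $|\lambda_0|^2 \in \sigma(M^2)$.

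Finally I would invoke Proposition \ref{HSprop} applied to the antilinear operator $M$, namely that $\lambda_0 \in \sigma(M)$ if and only if $|\lambda_0|^2 \in \sigma(M^2)$. Combined with the previous step this yields $\lambda_0 \in \sigma(M) = \sigma(A)$, and since $\lambda_0 \in \operatorname{supp}\mu$ was arbitrary the proof is complete.

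The only subtlety worth flagging is the apparent tension between the support of $\mu$, which is merely biradial and need not be circularly symmetric, and the spectrum $\sigma(A)$, which is always circularly symmetric by Theorem \ref{spectmapthm1}. This is precisely what Proposition \ref{HSprop} resolves: spectral membership of $\lambda_0$ depends only on the modulus $|\lambda_0|$ through the condition $|\lambda_0|^2 \in \sigma(M^2)$, so locating a single support point on the circle of radius $|\lambda_0|$ automatically places that entire circle, and in particular $\lambda_0$ itself, into the spectrum. Thus no circular symmetry of the measure is required, and the argument is otherwise a routine essential-range computation.
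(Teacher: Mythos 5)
Your proof is correct, but it takes a different route from the paper's. You work entirely in the model space: after identifying $\sigma(A)=\sigma(\lambda\tau)$ via $U$, you note that $(\lambda\tau)^2$ is the $\C$-linear multiplication operator by $|\lambda|^2$ on $L^2(\mu)$, invoke the standard fact that the spectrum of a multiplication operator is the $\mu$-essential range of its symbol (the relevant inclusion being an easy approximate-eigenvector argument with normalized characteristic functions of $g^{-1}(B(c,\delta))$), and then pull back through Proposition \ref{HSprop} to place each support point in $\sigma(A)$. The paper argues the contrapositive instead: for $\lambda_0\notin\sigma(A)$ it constructs a continuous bump $g(\lambda)=f(|\lambda|^2)$ with $f\geq 0$ vanishing on $\sigma(A^2)$ and $g(\lambda_0)=1$ (possible by Proposition \ref{HSprop}), uses Weierstrass approximation and the continuous functional calculus of the positive semidefinite $\C$-linear operator $A^2$ to get $f(A^2)=0$, and transports this through \eqref{spectA2} to conclude $\int_\C g\,d\mu=0$, hence $\mu$ vanishes on a neighborhood of $\lambda_0$. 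Both arguments pivot on the same two ingredients --- Proposition \ref{HSprop} and the fact that $A^2$ becomes multiplication by $|\lambda|^2$ --- so the difference is one of packaging: yours is shorter and more standard, at the cost of importing the essential-range characterization as an external fact; the paper's stays inside the machinery it has already built (polynomial approximation of continuous functions of $A^2$, as in Lemma \ref{sapolyapprox}) and produces the null neighborhood of $\lambda_0$ explicitly. One small point worth making explicit in your version: Proposition \ref{HSprop} is applied to the operator $M=\lambda\tau$, so you should note that $M$ is bounded (because $\mu$ is compactly supported) and antilinear, which is what makes the proposition applicable.
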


\begin{proof}
By Theorem \ref{spectralthm}, we have $UA^2U^{-1} = |\lambda|^2$ and
therefore
\begin{equation}\label{spectA2}
Up(A^2)U^{-1} = p(|\lambda|^2)
\end{equation}
for all ordinary complex analytic polynomials $p$. 
The operator $A^2$ is $\C$-linear
self-adjoint and positive semidefinite.

Let $\lambda_0\not\in \sigma(A)$.
Let $g(\lambda) = f(|\lambda|^2)$, where
$f$ is a nonnegative compactly supported function on $\R^+$
such that $f$ vanishes
on $\sigma(A^2)$  and $g(\lambda_0) = 1$, which is possible due to
Proposition \ref{HSprop}. Let $\{p_j\}$ be a sequence
of ordinary polynomials converging uniformly to $f$ on the support of $f$.
Then $\norm{f(A^2) - p_j(A^2)} \to 0$ as $j\to\infty$, and, since $f(A^2)=0$,
from \eqref{spectA2} we have
\[\int_\C g(\lambda)\,d\mu(\lambda)=0.\]
Hence there exists an open set $V\subset \C$ containing $\lambda_0$
such that $\mu(V)=0$ and therefore $\lambda_0$ is not in the support of $\mu$.
\end{proof}

We are now ready to prove Theorem \ref{spectmapthm2}
and Corollary \ref{spectradprop}.

\begin{proof} {\it of Theorem \ref{spectmapthm2}}.
The inclusion $\subset$ for \eqref{spectmapeq} is Theorem \ref{spectmapthm1}.
For the inclusion $\supset$,
take $\lambda_0 \not\in \hat{f}(\sigma(A))$,
and let $H$ and $A$ be decomposed as in \eqref{hdirectsum} and
\eqref{adirectsum}.
By Corollary \ref{spectralthmf}, we have
\begin{equation}\label{fmultop}
U(\lambda_0 I_\alpha - f(A_\alpha))U^{-1} =
\lambda_0 - u(|\lambda|^2) - v(|\lambda|^2)\lambda\tau.
\end{equation}
Since $\sigma(A)$ is a compact set,
\[c := \min_{\lambda\in\sigma(A)}
\Big|\left|\lambda_0 - u(|\lambda|^2)\right| -
\left|v(|\lambda|^2)\lambda\right|\Big|
=\min_{\lambda\in\sigma(A)}
|\lambda_0 - \hat{f}(\lambda)| > 0,\]
and, since $\hat{f}(\sigma(A)) \supset \hat{f}(\sigma(A_\alpha))$
by \eqref{spectcup}, we have
\[ 0 < c \leq \min_{\lambda\in\sigma(A_\alpha)}
\Big|\left|\lambda_0 - u(|\lambda|^2)\right| -
\left|v(|\lambda|^2)\lambda\right|\Big|
\qquad\text{for all }\alpha.\]
Hence the multiplication operator on the right-hand side of 
equation \eqref{fmultop} is boundedly invertible, with the inverse
having the norm estimate
\[\max_{\lambda\in\sigma(A_\alpha)}
\frac{1}{\Big|\left|\lambda_0 - u(|\lambda|^2)\right| -
\left|v(|\lambda|^2)\lambda\right|\Big|} \leq \frac{1}{c} < \infty
\qquad\text{for all }\alpha.
\]
We have shown that
the operators $\lambda_0 I_\alpha - f(A_\alpha)$ are boundedly
invertible for all $\alpha$, with the norms of their
inverses uniformly bounded.
Hence $\lambda_0 I - f(A)$ is boundedly invertible, so
$\lambda_0\not\in \sigma(f(A))$.
\end{proof}

\begin{proof} {\it of Corollary \ref{spectradprop}}.
The second equality follows from Theorem \ref{spectmapthm2}.
For the first, let $H$ and $A$ be decomposed as in \eqref{hdirectsum} and
\eqref{adirectsum}, and
note that $\norm{f(A)} = \sup_\alpha \norm{f(A_\alpha)}$. Then, by
Corollary \ref{spectralthmf},
\begin{align*}
\norm{f(A_\alpha)g}^2 &\leq \int_{\sigma(A_\alpha)}
(|u(|\lambda|^2)| + |v(|\lambda|^2)\lambda|)^2
|h(\lambda)|^2\,d\mu(\lambda) \\
&\leq \sup_{\lambda\in\sigma(A_\alpha)} |\hat{f}(\lambda)|^2
\norm{h}_{L^2(\mu)}^2,
\end{align*}
where we
denoted $h = Ug$.
Hence $\norm{f(A_\alpha)} \leq \sup_{\lambda\in\sigma(A_\alpha)}
|\hat{f}(\lambda)|$. The inequality in the other direction is established
by considering invertibility of $\lambda I_\alpha - f(A_\alpha)
= \lambda(I_\alpha - \lambda^{-1}f(A_\alpha))$ via the Neumann series
expansion 
in the usual way.
\end{proof}

Starting from \cite{BEU}, 
multiplication operators have played a major role in
understanding the $\setC$-linear
invariant subspace problem. 
For the corresponding problems
in the antilinear case,
the role of multiplication is taken by the operator
\eqref{antimulti}.

\section{Unbounded antilinear Jacobi operators and the moment problem}\label{sec4}
Let $\rho$ be a biradial measure with finite moments and define
\begin{equation}\label{infmoments}
m_k = \int_\C \pow{\lambda}{k}\,d\rho(\lambda),\qquad
k=0,1,2,\dots .
\end{equation}
In the corresponding 
moment problem we are given an arbitrary sequence of complex numbers
$\{m_k\}_{k=1}^\infty$ such that, after a possible scaling, $m_0=1$.
Of course, then necessarily $m_{2k}\geq 0$ for $k\in \setN$.
The problem consists of finding a biradial measure
$\rho$ such that the equation \eqref{infmoments} is satisfied. The
answer 
can be given in terms of the matrix 
\begin{equation}\label{momentmat}
M = \begin{bmatrix}
m_0 & \overline{m_1} & m_2 & \overline{m_3} & \cdots \\
m_1 & m_2 & m_3 & m_4 & \cdots \\
m_2 & \overline{m_3} & m_4 & \overline{m_5} & \cdots \\
m_3 & m_4 & m_5 & m_6 & \cdots \\
\vdots & \vdots & \vdots & \vdots & \ddots
\end{bmatrix},
\end{equation}
where, denoting again the conjugation operator by $\tau$,
$M_{ij} = \tau^{j-1} m_{i+j-2}$.  Observe that $M$ is 
Hermitian, as well as, in a certain
sense, a Hankel-like matrix. Thereby it is natural to 
contrast this with the classical Hamburger moment problem.

\begin{theorem}\label{momentthm}
Let $\{m_k\}_{k=1}^\infty \subset \C$ and $m_0=1$. Then there exists
a biradial measure $\rho$ with finite moments such that
\eqref{infmoments} holds if and only if
the submatrix $M_{1:k,1:k}$ of \eqref{momentmat} is positive semi-definite
for all $k$.
\end{theorem}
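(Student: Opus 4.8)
The plan is to recognize this as a moment-problem statement whose ``only if'' direction is immediate from positivity of an $L^2$ inner product, and whose ``if'' direction should be obtained by reducing it to the already-established antilinear Favard-type machinery. First I would settle necessity. Suppose a biradial measure $\rho$ with finite moments realizes the sequence via \eqref{infmoments}. Then on $\mathcal{P}(r2)$ the inner product \eqref{innerpa} is positive semidefinite, so for any coefficients $c_1,\dots,c_k\in\setC$ the polynomial $p(\lambda)=\sum_{i=1}^k c_i\pow{\lambda}{i-1}$ satisfies $0\le\inprod{p,p}=\int_\C p\overline{p}\,d\rho$. Expanding the products $\pow{\lambda}{i-1}\overline{\pow{\lambda}{j-1}}$ and checking how the conjugation $\tau$ acts on the monomials — precisely the parity-dependent bookkeeping encoded in $M_{ij}=\tau^{j-1}m_{i+j-2}$ — shows $\inprod{p,p}=c^*M_{1:k,1:k}\,c$. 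Hence each finite principal submatrix is positive semidefinite. The one genuinely fiddly point here is verifying that the entry pattern in \eqref{momentmat} (with conjugation bars on odd-column, even-index positions) is exactly the Gram matrix of the $\pow{\lambda}{k}$ under \eqref{innerpa}; I would carry out this identification carefully, splitting into the four cases according to the parities of $i-1$ and $j-1$.

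For sufficiency, assume every $M_{1:k,1:k}$ is positive semidefinite. I would distinguish two regimes exactly as the earlier development does. If all these submatrices are strictly positive definite, then the sesquilinear form defined on $\mathcal{P}(r2)$ by the moments is a genuine inner product, so the Gram--Schmidt process applied to $1,\pow{\lambda}{1},\pow{\lambda}{2},\dots$ never breaks down and produces orthonormal polynomials $p_0,p_1,p_2,\dots$ together with Jacobi parameters $\{\alpha_j\}_{j=1}^\infty$, $\{\beta_j\}_{j=1}^\infty$ with $\beta_j>0$, via the three-term recurrence \eqref{psrecurs}. The positivity of $\beta_j$ is forced by the determinantal ratios of the $M_{1:k,1:k}$ being positive, just as in the classical Hamburger case. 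If instead some submatrix is singular, let $n$ be least with $\det M_{1:n,1:n}=0$; then the form degenerates at degree $n-1$, yielding finitely many parameters and, by the finite-support construction of Section \ref{bifisu} (Corollary \ref{corfinitesurj}), a biradial measure supported on at most $n$ points whose moments match $m_0,\dots$ in the relevant range by Proposition \ref{propsamemoments}.

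In the nondegenerate regime I would then recover a measure from the parameters. The natural route is to invoke Theorem \ref{boundedmu}, but that assumes \emph{bounded} parameters, which we are not given here; so instead I would construct $\rho$ directly from the moment data. Following the classical Hamburger strategy, for each $n$ the principal submatrix $M_{1:n,1:n}$ together with the finite construction of Corollary \ref{corfinitesurj} yields a finitely supported biradial measure $\rho_n$ whose moments $\int_\C\pow{\lambda}{k}\,d\rho_n$ agree with $m_k$ for $k=0,\dots,2n-1$ (Proposition \ref{propsamemoments}). Having produced a sequence of biradial probability measures with the correct truncated moments, I would extract a weakly convergent subsequence — here one must first control tightness, since without boundedness the supports can escape to infinity — and pass to the limit to obtain a biradial measure $\rho$ realizing all the $m_k$. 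Finally an application of Lemma \ref{birmlemma} replaces the limit by a symmetric biradial measure with the same values on $\mathcal{P}(r2)$.

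The main obstacle I expect is precisely this last weak-compactness step in the unbounded case: because the Jacobi parameters need not be bounded, the approximating measures $\rho_n$ need not be supported in a fixed compact set, so Helly-type selection does not apply directly and one must justify that the moment bounds furnish enough tightness (or determinacy) to guarantee a limiting measure with finite moments matching the prescribed sequence. Handling this carefully — rather than the combinatorial verification in the necessity direction, which is routine though tedious — is where the real work of the proof lies.
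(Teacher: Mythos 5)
Your necessity argument is correct, and it is indeed the routine half: the identity $\pow{\lambda}{i}\overline{\pow{\lambda}{j}}=\tau^{j}\pow{\lambda}{i+j}$ shows that $M_{1:k,1:k}$ is (up to conjugating the coefficient vector) the Gram matrix of $1,\pow{\lambda}{1},\dots,\pow{\lambda}{k-1}$ in $L^2(\rho)$, hence positive semidefinite. The genuine gap is in sufficiency, and you have located it yourself without closing it: your plan produces finitely supported biradial measures $\rho_n$ matching the truncated moment data (Corollary \ref{corfinitesurj} plus Proposition \ref{propsamemoments}) and then requires a limit measure realizing \emph{all} the $m_k$, but the tightness and moment-convergence step in the unbounded regime is only announced as ``where the real work lies,'' not carried out. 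As it happens, the classical Hamburger argument does transfer: since $\int_\C|\lambda|^{2}\,d\rho_n=m_{2}$ for all large $n$, Chebyshev's inequality gives tightness and Prokhorov's theorem a weakly convergent subsequence; and since $\int_\C|\lambda|^{2k+2}\,d\rho_n=m_{2k+2}$ uniformly in large $n$, the functions $\pow{\lambda}{k}$ are uniformly integrable, so the moments pass to the limit. But none of this is in your text, and without it the proof is incomplete exactly at the theorem's core. A second, smaller hole is your degenerate case: when some $M_{1:n,1:n}$ is singular you match moments only ``in the relevant range,'' whereas \eqref{infmoments} must hold for every $k$; you still need to argue that positive semidefiniteness of all sections forces the given higher $m_k$ to coincide with the moments of the finitely supported measure (via the linear dependence of $\pow{\lambda}{n}$ on lower monomials propagating through the recursion).

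For comparison, the paper's proof is a two-line reduction rather than a construction: it invokes the solution of the complex moment problem of Stochel and Szafraniec \cite[Theorem 1]{STSZ}, which yields a Borel probability measure $\mu$ on $\C$ with the prescribed moments precisely under the stated positivity condition, and then applies Lemma \ref{birmlemma} to replace $\mu$ by a symmetric biradial measure with the same integrals against $\mathcal{P}(r2)$. You do perform this last biradialization step, so what your proposal misses is the reduction to a known two-dimensional moment theorem; completing your Favard-style construction would in effect reprove the needed part of \cite{STSZ}. That is feasible, but as written your argument has a hole where the paper has a citation.
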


\begin{proof}
That the claim holds with a Borel probability measure $\mu$,
in lieu of a biradial measure $\rho$, follows from \cite[Theorem 1]{STSZ}.
This measure $\mu$ can then
be replaced by a biradial measure $\rho$ by Lemma \ref{birmlemma}.
\end{proof}

\begin{corollary}
The mapping from biradial measures to complex Jacobi parameters
is surjective.
\end{corollary}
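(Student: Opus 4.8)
The plan is to invert the assignment measure $\mapsto$ Jacobi parameters through the intermediary of moments and then invoke Theorem \ref{momentthm}. So fix an arbitrary family of Jacobi parameters $\{\alpha_j\}$, $\{\beta_j\}$ with every $\beta_j>0$; the finite case is already Corollary \ref{corfinitesurj}, so the point is the infinite (and possibly unbounded) case. By Proposition \ref{propsamemoments} the first $n$ parameters are in one-to-one correspondence with the first $2n-1$ moments $m_k=\int_\C \pow{\lambda}{k}\,d\rho$, so it suffices to exhibit a single admissible moment sequence $\{m_k\}$ that returns the prescribed parameters and to feed it to the moment theorem.

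To manufacture the moments, I would run the recurrence \eqref{psrecurs} formally. Put $m_0=1$ and $p_0\equiv 1$; then \eqref{psrecurs} produces, from $p_j$, $p_{j-1}$ and $\lambda\overline{p_j}$, a polynomial $p_{j+1}\in\mathcal{P}(r2)$ of exact degree $j+1$. Imposing the relations $\inprod{p_i,p_j}=\delta_{ij}$ one degree at a time determines the moments recursively, because every inner product $\inprod{p_i,p_j}$ is a fixed linear expression in $m_0,\dots,m_{i+j}$ and their conjugates (each product $\pow{\lambda}{a}\,\overline{\pow{\lambda}{b}}$ collapsing back to some $\pow{\lambda}{c}$ or its conjugate), while the highest moment appears with a nonzero coefficient built from the positive leading coefficients of the $p_j$. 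Thus each successive moment is uniquely solvable and the construction never stalls, as the opening values $m_1=\alpha_1$ and $m_2=\beta_1^2+|\alpha_1|^2$ illustrate.

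It remains to check the hypothesis of Theorem \ref{momentthm}, that each leading submatrix $M_{1:k,1:k}$ of \eqref{momentmat} is positive semidefinite. This is automatic, and is the crux of the argument: $M_{1:k,1:k}$ is exactly the Gram matrix of $1,\pow{\lambda}{1},\dots,\pow{\lambda}{k-1}$ in the sesquilinear form just built, while by construction $p_0,\dots,p_{k-1}$ is an orthonormal basis of the same space; hence $M_{1:k,1:k}$ is congruent to the identity and is positive definite. Theorem \ref{momentthm} then supplies a biradial measure $\rho$ with finite moments realizing $\{m_k\}$, and since the Gram--Schmidt computation of the Jacobi parameters depends only on these moments, Proposition \ref{propsamemoments} guarantees that the parameters attached to $\rho$ are precisely $\{\alpha_j\}$, $\{\beta_j\}$. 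The main obstacle is exactly this positivity step: the formal moment inversion is purely algebraic, and what converts it into the positive semidefiniteness demanded by Theorem \ref{momentthm} is the observation that declaring the recurrence-generated $p_j$ orthonormal yields a genuinely positive form on each $\mathcal{P}_k(r2)$, the strict positivity of the $\beta_j$ being what keeps every stage nondegenerate.
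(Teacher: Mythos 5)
Your overall route is the same as the paper's: build an inner product on $\mathcal{P}(r2)$ from the prescribed recurrence, read off moments, invoke Theorem \ref{momentthm} to produce a biradial measure, and finish by noting that the Jacobi parameters are determined by the moments. But there is a genuine gap at the step you describe as ``determines the moments recursively.'' At level $n$, the orthonormality conditions $\inprod{p_i,p_j}=\delta_{ij}$ with $i+j=n$ comprise \emph{several} distinct linear equations (one for each pair $(i,j)$ with $i+j=n$, up to Hermitian symmetry), and every one of them involves the single new unknown $m_n$ (or $\overline{m_n}$) with nonzero coefficient. So you are not solving one equation in one unknown at each stage; you are solving an over-determined system, and its consistency --- the existence of a single Hankel-like Hermitian form with entries $\inprod{\pow{\lambda}{i},\pow{\lambda}{j}}=\tau^j m_{i+j}$ under which \emph{all} the $p_j$ are simultaneously orthonormal --- is the real crux, not the positivity step you single out. (Positivity is indeed automatic from your Gram-matrix argument once such a form exists, and so is reality of the even moments; both presuppose consistency.) Nor is consistency a formality: it is equivalent to multiplication by $\lambda\tau$ being self-adjoint with respect to the form, which holds precisely because the recurrence matrix $\alp{J}$ of \eqref{infjacobi} is complex symmetric with real off-diagonal entries $\beta_j$; for a recurrence with unequal sub- and superdiagonal coefficients the analogous system would be unsolvable, even though your ``one new moment per level, nonzero leading coefficient'' reasoning would apply verbatim.

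The paper closes exactly this gap by reversing the order of construction: it first defines the inner product abstractly, by \emph{declaring} the recurrence polynomials $p_j$ to be orthonormal (so existence, Hermitian symmetry, and positive definiteness are free), sets $m_k=\inprod{\pow{\lambda}{k},1}$, and then \emph{proves} the moment structure via the operator identity $m_{i+j}=e_1^T(\alp{J}\tau)^{i+j}e_1=\tau^j\bigl((\alp{J}\tau)^j e_1\bigr)^*\bigl((\alp{J}\tau)^i e_1\bigr)=\tau^j\inprod{\pow{\lambda}{i},\pow{\lambda}{j}}$, which is where the self-adjointness of $\alp{J}\tau$ enters. With that identity, the matrix \eqref{momentmat} truly is the Gram matrix of the monomials, Theorem \ref{momentthm} applies, and the identification $\inprod{p,q}=\int_\C p\overline{q}\,d\rho$ (hence of the Jacobi parameters of $\rho$) follows. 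Your proposal becomes a complete proof if you supply this computation, or an equivalent proof of consistency of the moment system; as written, the decisive step is asserted rather than proved.
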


\begin{proof}
Let $\{\alpha_j,\beta_j\}_{j=1}^\infty \in (\C\times(0,\infty))^\infty$
be complex Jacobi parameters. Define a sequence of
polynomials $\{p_j\}_{j=0}^\infty$ by the recursion \eqref{psrecurs} starting
with $p_0\equiv 1$. Define an inner product in $\mathcal{P}(r2)$ by
\[\inprod{p,q} = \sum_{j\geq 0} a_j \overline{b_j},
\qquad (p,q\in\mathcal{P}(r2)),
\]
where $p = \sum_j a_j p_j$ and $q = \sum_j b_j p_j$, and let
\[m_k = \inprod{\pow{\lambda}{k},1} \qquad \text{for }k=0,1,2,\dots.
\]
From the recursion \eqref{psrecurs} we see
that multiplication by $\lambda\tau$ in $\mathcal{P}(r2)$ is
represented by $\alp J\tau$ in the basis $\{p_j\}_{j=0}^\infty$, where
$\alp J$ is the infinite complex Jacobi matrix given by formula
\eqref{infjacobi}. From this we find
\[m_{i+j} = e_1^T (\alp J\tau)^{i+j} e_1 =
\tau^j \left( (\alp J\tau)^j e_1 \right)^* \left( (\alp J\tau)^i e_1 \right) =
\tau^j \inprod{\pow{\lambda}{i},\pow{\lambda}{j}},
\]
where $e_1 = \begin{bmatrix} 1 & 0 & 0 & \cdots \end{bmatrix}^T$.
Since the inner product is positive-definite, the condition of
Theorem \ref{momentthm} is satisfied and therefore there exists
a biradial measure $\rho$ such that
\[\inprod{\pow{\lambda}{k},1} = m_k = \int_{\C} \pow{\lambda}{k} d\rho
\qquad \text{for all }k.\]
Since $\tau^j \pow{\lambda}{i+j} = \pow{\lambda}{i}
\overline{\pow{\lambda}{j}}$, we get $\inprod{p,q} = \int_\C p\overline{q}\,
d\rho$ for any polynomials $p$ and $q$. This completes the proof.
\end{proof}

\section{Noninjectivity of the mapping of measures to Jacobi operators}
\label{noninjfinite}

The mapping of biradial measures to antilinear Jacobi operators is
surjective, but noninjective (even for measures with bounded support).
We do not discuss this in full generality, but content ourselves
with a precise characterization in the finite dimensional case
covered in Section \ref{bifisu}.

\begin{theorem}\label{noninjfinthm}
Let $\rho,\rho'$ be biradial measures with supports of cardinality $n$.
Then $\rho$ and $\rho'$ have the same Jacobi parameters if and only if
(using the notation of Definition \ref{finitemeas}) we have
\begin{align}
m&=m', \notag \\
\rho_k &= \rho_k',\qquad &&\text{when } 2m+1\leq k \leq n, \notag \\
\label{nonuniqcond}
\rho_{2k-1} + \rho_{2k} &= \rho_{2k-1}' + \rho_{2k}', \qquad
&&\text{when } 1\leq k \leq m,  \\
\lambda_k &= \lambda_k',\qquad &&\text{when } 2m+1\leq k \leq n, \notag \\
|\lambda_{2k}| &= |\lambda_{2k}'|,\qquad &&\text{when } 1\leq k \leq m,
\notag \\
\rho_{2k-1}\lambda_{2k-1} + \rho_{2k} \lambda_{2k}
&= \rho_{2k-1}'\lambda_{2k-1}' + \rho_{2k}'\lambda_{2k}',
\qquad &&\text{when } 1\leq k \leq m. \notag
\end{align}
\end{theorem}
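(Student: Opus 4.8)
The plan is to prove the two implications by quite different means: the direction $\Leftarrow$ is a routine moment computation feeding into Proposition~\ref{propsamemoments}, while the direction $\Rightarrow$ rests on reading the Jacobi matrix $\alp J$ through a Takagi (symmetric singular value) decomposition, so that the non-uniqueness of the measure becomes the non-uniqueness of Takagi vectors at a repeated singular value.

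For $\Leftarrow$, by Proposition~\ref{propsamemoments} it suffices to check that the listed conditions force $\int_\C \pow{\lambda}{k}\,d\rho = \int_\C \pow{\lambda}{k}\,d\rho'$ for $k=0,\dots,2n-1$. Grouping atoms by modulus and using $\pow{\lambda}{2l}=|\lambda|^{2l}$, $\pow{\lambda}{2l+1}=\lambda|\lambda|^{2l}$ from Definition~\ref{polynot}, one gets
\[\int_\C\pow{\lambda}{2l}\,d\rho=\sum_{r}r^{2l}\,W(r),\qquad
\int_\C\pow{\lambda}{2l+1}\,d\rho=\sum_{r}r^{2l}\,C(r),\]
where $W(r)$ is the total mass on $|\lambda|=r$ and $C(r)=\sum_{|\lambda_j|=r}\rho_j\lambda_j$ its unnormalised centre of mass; for a single point $W(r)=\rho_k,\ C(r)=\rho_k\lambda_k$, and for a pair $W(r)=\rho_{2k-1}+\rho_{2k},\ C(r)=\rho_{2k-1}\lambda_{2k-1}+\rho_{2k}\lambda_{2k}$. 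The conditions assert exactly that the modulus set together with the data $W(r),C(r)$ agrees for $\rho$ and $\rho'$, so all moments agree and Proposition~\ref{propsamemoments} finishes this direction.

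For $\Rightarrow$, I would first convert \eqref{Jdecomp}: right-multiplying $\alp D\bar Q=Q\alp J$ by $Q^T$ and using $\bar QQ^T=I$ gives $\alp D=Q\alp J Q^T$, hence $\alp J=Q^*\alp D\bar Q$. Writing $\lambda_k=e^{2i\psi_k}\sigma_k$ with $\sigma_k=|\lambda_k|$ and $E=\diag(e^{i\psi_k})$ turns this into the Takagi factorisation $\alp J=W\Sigma W^T$ with $W=Q^*E$ unitary, $\Sigma=\diag(\sigma_k)$, and columns satisfying $\alp J\bar w_k=\sigma_k w_k$. Since $q_{k1}=\sqrt{\rho_k}$ by \eqref{Qpolyn} (as $p_0\equiv1$), the first entries of the Takagi vectors carry the measure:
\[|(w_k)_1|^2=\rho_k,\qquad \sigma_k(w_k)_1^2=\rho_k\lambda_k,\qquad \sigma_k=|\lambda_k|.\]
The same applies to $\rho'$, giving $\alp J=W'\Sigma W'^T$ with identical formulas. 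The Takagi values are the singular values of the common $\alp J$, hence unique; with the ordering of Definition~\ref{finitemeas}, and since (by the argument preceding Corollary~\ref{corfinitesurj}) no singular value is repeated more than twice, this already yields $\sigma_k=\sigma_k'$, the equal multiplicity pattern $m=m'$, and the modulus conditions. It remains to compare $W$ and $W'$: they differ by a transformation $W'=WB$ whose blocks act inside the coneigenspaces $\{\,w:\alp J\bar w=\sigma w\,\}$, and within such a space any change of Takagi basis must be \emph{real} orthogonal, because $\alp J\overline{(aw+bw')}=\sigma(\bar aw+\bar bw')$ has to coincide with $\sigma(aw+bw')$. For a simple $\sigma_k$ the block is $\pm1$, so $(w_k')_1=\pm(w_k)_1$ and the displayed formulas give $\rho_k=\rho_k'$, $\lambda_k=\lambda_k'$. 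For a doubly covered modulus the block lies in $O(2)$; writing $(p,q)=((w_{2k-1})_1,(w_{2k})_1)$ the pair data is $\rho_{2k-1}+\rho_{2k}=|p|^2+|q|^2$ and $\rho_{2k-1}\lambda_{2k-1}+\rho_{2k}\lambda_{2k}=\sigma(p^2+q^2)$, and both $|p|^2+|q|^2$ and $p^2+q^2$ are invariant under real orthogonal substitutions, giving the remaining pair conditions.

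The main obstacle I expect is precisely this linear-algebra core: verifying that the Takagi-vector freedom at a repeated singular value is exactly $O(2)$ and not the full unitary group, and matching its two invariants $|p|^2+|q|^2$ and $p^2+q^2$ with the total mass and centre of mass of the pair. A secondary point to dispatch is the degenerate value $\sigma_k=0$, which is necessarily simple since two distinct atoms cannot both be $0$; there the Takagi vector is unique only up to a phase, but $\rho_k=|(w_k)_1|^2$ and $\lambda_k=0$ are still determined, so conditions for the single points continue to hold.
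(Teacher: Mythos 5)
Your argument is correct, and for the necessity direction it follows a genuinely different route from the paper's. The sufficiency direction coincides with the paper's: group the atoms by modulus, check that the listed data determines all moments, and invoke Proposition \ref{propsamemoments}. For necessity the paper stays entirely within the moment framework: equal Jacobi parameters give equal moments \eqref{samemoments} (Proposition \ref{propsamemoments} again); the consimilarity \eqref{Jdecomp} makes $\alp{J}\overline{\alp{J}}$ similar to $\alp{D}\overline{\alp{D}}$, which fixes the moduli with multiplicities (hence $m=m'$ and the fifth condition); and then, the radii being common and distinct, the even and odd moments recover the mass and the centre of mass on each circle through invertible Vandermonde-type systems. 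You never use moments in this direction: you turn \eqref{Jdecomp} into a Takagi factorization $\alp{J}=W\Sigma W^T$ whose vectors carry the measure in their first entries via $|(w_k)_1|^2=\rho_k$ and $\sigma_k(w_k)_1^2=\rho_k\lambda_k$, obtain the moduli from uniqueness of singular values (the same first step as the paper, in different clothing), and then classify the residual freedom in the Takagi vectors: real orthogonal blocks on coneigenspaces of positive singular values, a phase at a necessarily simple zero singular value. Your key verification is sound -- the coneigenvectors for $\sigma>0$ form only a real-linear space, since $\alp{J}\overline{(aw+bw')}=\sigma(\bar{a}w+\bar{b}w')$ forces $a,b\in\R$, so the change of basis is real and unitary, hence in $O(2)$ -- and the two $O(2)$-invariants $|p|^2+|q|^2$ and $p^2+q^2$ are exactly the total mass and (up to the factor $\sigma$) the centre of mass of a pair. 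The trade-off: the paper's proof is shorter because Proposition \ref{propsamemoments} and the Vandermonde argument are already in place, whereas yours is self-contained linear algebra and yields more structure, exhibiting the whole fiber of the map from biradial measures to Jacobi parameters as an orbit under block $O(2)$ (and sign/phase) transformations, which is precisely the geometric content of Figure \ref{nonuniqfig}.
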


\begin{proof}
It is easy to verify that the conditions \eqref{nonuniqcond} imply
\eqref{samemoments}. Hence sufficiency follows from Proposition
\ref{propsamemoments}.

To prove necessity, assume the condition \eqref{samemoments} holds.
First, the equation $\eqref{Jdecomp}$ implies that $\alp{J}\overline{\alp J}$
and $\alp{D}\overline{\alp D}$ have the same eigenvalues. Hence
the sets $\{|\lambda_i|\}_{i=1}^n$ and $\{|\lambda_i'|\}_{i=1}^n$ are
the same, including multiplicities, which implies the first and the fifth
condition in \eqref{nonuniqcond}.
Moreover, the conditions
given by \eqref{samemoments} for even $k$ then imply the second and
the third condition in \eqref{nonuniqcond}. Finally, for odd $k$,
\eqref{samemoments} imply the fourth and the sixth condition in
\eqref{nonuniqcond}.
\end{proof}

The preceding theorem says that the total mass on each
origin centred circle is unique and the centre of the masses on each
such circle is unique as well. If the centre of mass is on the circle, then
all the mass is concentrated on a single unique point. If the centre
of mass is inside the circle, then the mass is distributed
among two non-unique points as illustrated in Figure \ref{nonuniqfig}.

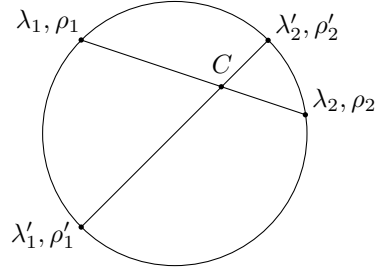
\begin{figure}
\begin{center}
\setlength{\unitlength}{.1pt}
\begin{picture}(1000,1000)
\put(500, 500){\circle{1000}}
\put(146,854){\line(12,-4){850}}
\put(146,854){\circle*{20}}
\put(-100,900){$\lambda_1, \rho_1$}
\put(995,571){\circle*{20}}
\put(1020,600){$\lambda_2, \rho_2$}
\put(677,677){\circle*{20}}
\put(640,730){$C$}
\put(146,146){\line(1,1){707}}
\put(146,146){\circle*{20}}
\put(-120,90){$\lambda_1', \rho_1'$}
\put(853,853){\circle*{20}}
\put(880,880){$\lambda_2', \rho_2'$}
\end{picture}
\caption{\label{nonuniqfig}  An illustration of the case where the centre of
mass $C$ resides inside
the circle. Here $\rho_i,\rho_i'$ are the masses of the points and
$\lambda_i,\lambda_i'$ are their positions. The total mass and the centre of
mass are unique.}
\end{center}
\end{figure}

There is also an immediate application of this result in numerical analysis.

\smallskip
\begin{example} Assume that $\alp{M}\in \setC^{n \times n}$ 
is a very large matrix and $b \in \setC^n$.
The $\R$-linear GMRES (Generalized Minimal Residual) method is an
iterative method for solving the linear system
$$\alp{M} \overline{x} = b.$$
In \cite{HP}  its convergence behaviour was
analyzed. If $\alp{M}$ complex symmetric, 
then it can be assumed that
$\alp{M}$ is actually a diagonal matrix. 
Suppose, moreover, that
$\alp{M}$ has distinct diagonal entries such that each
origin centred circle intersected either two of them or none, and that
and $b\in\R^n$ is a vector with all its entries ones. 
Then in \cite[Section 5]{HP}  it was observed that the
numerical convergence behaviour was such that the
residual dropped only at every other iteration step.

We can now explain this observation as follows.
The norm of the residual vector $r_k = b - \alp{M} \overline{x_k}$,
where $x_k$ is the approximation at the $k$th  step, satisfies
\[\norm{r_k} = \min_{\substack{p \in \mathcal{P}_k(r2)\\p(0)=1}}
\norm{p(\alp{M})b}
\leq 
\min_{\substack{p \in \mathcal{P}_{k}(r2) \\ p(0)=1}}
\max_{\lambda\in \sigma(\alp{M})}
\left| p(\lambda) \right| \norm{b}.\]
By Theorem \ref{noninjfinthm}, there exists a unitary matrix
$Q\in\Cnxn$ such that $\rho=Q^*b\in\R^n$ and that the matrix
$\alp D = Q^* \alp{M} \overline{Q}$
is a diagonal matrix with the property that if 
$\lambda\in\C$ appears as a diagonal
entry then also $-\lambda$ does. Now we have
\[\norm{r_k} = \min_{\substack{p \in \mathcal{P}_k(r2)\\p(0)=1}}
\norm{p(\alp D)\rho}
\leq 
\min_{\substack{p \in \mathcal{P}_{k}(r2) \\ p(0)=1}}
\max_{\lambda\in \sigma(\alp D)}
\left| p(\lambda) \right| \norm{\rho}.\]
If $p(\lambda)=u(|\lambda|^2) + v(|\lambda|^2)\lambda$ is the minimizing
polynomial to the problem on the right-most side, then
due to symmetry, also $u(|\lambda|^2) - v(|\lambda|^2)\lambda$ is. Hence,
by uniqueness, $v=0$ and $p(\lambda)=u(|\lambda|^2)$. Thereby we may expect
that the residual drops only at every other iteration step.
\end{example}


\begin{thebibliography}{1}

\bibitem{AM}
{\sc J. Agler and J. McCarthy,}
{\em What can Hilbert spaces tell us about bounded functions in the bidisk?,} 
A glimpse at Hilbert space operators, 
Oper. Theory Adv. Appl., 207, Birkhäuser Verlag, Basel, 2010, 
pp. 81--97. 

\bibitem{BA} {\sc S. Banach}, Theory of Linear Operations, 
{\em North-Holland Publishing Co., Amsterdam,} 1987.


\bibitem{BEU}
{\sc A. Beurling,} {\em On two problems 
concerning linear transformations in Hilbert space,} 
Acta Math., 81 (1948), pp.  239--251.


\bibitem{CHP} {\sc J.~T.~Chang and D.~Pollard},
{\em Conditioning as disintegration},
Statistica Neerlandica 51(3) (1997), pp. 287--317.

\bibitem{EHVP} {\sc T. Eirola, M. Huhtanen and J. von Pfaler},
{\em Solution methods for $\setR$-linear problems in $\setC^n$,} 
SIAM J. Matrix Anal. Appl., 25 (2004), pp. 804--828. 

\bibitem{ERO} {\sc P.~Erd\"os and J.~C.~Oxtoby,}
{\em Partition of the plane into sets having positive measure in every
non-null measurable product set},
Trans. Amer. Math. Soc., 79 (1955), pp. 91--102. 

\bibitem{GP} {\sc S.R. Garcia and M. Putinar}, {\em Complex symmetric
operators and applications}, Trans. Amer. Math. Soc., 358 (2006), pp. 67--111.

\bibitem{GPII} {\sc S.R. Garcia and M. Putinar}, {\em Complex symmetric
operators and applications II}, Trans. Amer. Math. Soc., 359 (2007), pp. 
3913--3931.

\bibitem{HJ1} 
{\sc R.A. Horn and C.R. Johnson},
Matrix Analysis,
{\em Cambridge Univ. Press, Cambridge,} 1987.


\bibitem{HUNEVA}
{\sc M. Huhtanen and O. Nevanlinna,} 
{\em Real linear matrix analysis,}
Perspectives in operator theory, 
Banach Center Publ., 75, 
Polish Acad. Sci., Warsaw, (2007), pp. 171--189.

\bibitem{HP1} {\sc M. Huhtanen and A. Per\"am\"aki},
{\em Numerical solution of the $\setR$ linear Beltrami equation,}
Math. Comp., 81 (2012), pp. 387--397,

\bibitem{HP} {\sc M. Huhtanen and A. Per\"am\"aki},
{\em Orthogonal polynomials of the $\setR$-linear generalized minimal residual
method,} to appear in J. Approx. Theory. 

\bibitem{HS}
{\sc M. Huhtanen and S. Ruotsalainen}, 
{\em Real linear operator theory and its applications}, 
Integral Equat. Oper. Theory, 69 (2011) pp. 113--132.

\bibitem{MPT}
{\sc A. Megretski, V. Peller and S. Treil},
{\em The inverse spectral problem for self-adjoint Hankel operators,}
Acta Math., 174 (1995), pp. 241--309. 


\bibitem{Niko1} {\sc N. Nikolski},
Operators, Functions, and Systems: An Easy Reading: Vol. 1,
{\em AMS Mathematical Surveys and Monographs, Vol. 92, Providence,} 2002.

\bibitem{Niko2} {\sc N. Nikolski},
Operators, Functions, and Systems: An Easy Reading: Vol.  2
{\em AMS Mathematical Surveys and Monographs, Vol. 93, Providence,} 2002.
 
\bibitem{PEL} {\sc V. Peller}, Hankel Operators and Their Applications, 
{\em Springer Verlag, New York,} 2003.

\bibitem{PUTI} 
{\sc M.Putinar and H.S. Shapiro,}
{\em The Friedrichs operator of a planar domain II,}  
Recent Advances in Operator Theory and Related Topics,
(Szeged, 1999), 
Oper. Theory Adv. Appl., 127, Birkhäuser, Basel, 2001,
pp. 519--551. 


\bibitem{Rutiini} {\sc W. Rudin} Functional Analysis, 
Second ed., 
{\em International Series in Pure and Applied Mathematics, McGraw-Hill, 
 New York,} 1991

\bibitem{Santtu}
{\sc S. Ruotsalainen},
{\em On a Weyl-von Neumann-type theorem for antilinear self-adjoint operators,}
arXiv:1203.4670, 2012.

\bibitem{SIBOOK} {\sc B. Simon},
Szeg\"o's Theorem and Its Descendants:
Spectral Theory for $L^2$ Perturbations of Orthogonal Polynomials,
{\em  Princeton University Press, Princeton and Oxford,} 2010.

\bibitem{STSZ} {\sc J.~Stochel and F.~H.~Szafraniec},
{\em The complex moment problem and subnormality: a polar decomposition
approach,}
J. Funct. Anal., 159 (1998), pp. 432--491.


\end{thebibliography}
\end{document}